\documentclass[11pt]{article}
\usepackage{amsmath,amssymb,amsthm,bbm,mathtools,mathrsfs,color,xcolor,comment}

\usepackage[left=1.4in,top=1.3in,right=1.3in,bottom=1.3in,footskip = 0.333in]{geometry}
\usepackage{mathtools}
\mathtoolsset{showonlyrefs}

\numberwithin{equation}{section}

\newtheorem{theorem}{Theorem}
\newtheorem{proposition}{Proposition}
\newtheorem{lemma}{Lemma}[section]

\newcommand{\Tr}{\mathrm{Tr}}
\newcommand{\In}{\mathcal I^n}
\newcommand{\E}{\mathbb{E}}
\newcommand{\Var}{\mathrm{Var}}

\newcommand{\dd}{\,\mathrm d}
\newcommand{\Sph}{\mathbb S^2}
\newcommand{\Splus}{\operatorname{Sym}_3^+}

\newcommand{\cM}{\mathcal M}
\newcommand{\cP}{\mathcal P}
\newcommand{\norm}[1]{\left\lVert #1\right\rVert}

\newcommand{\tr}{\text{Tr}}
\newcommand{\R}{\mathbb R}
\newcommand{\bP}{\mathcal{P}}

\newcommand{\bi}{\mathbf{i}}
\newcommand{\ba}{\mathbf{a}}
\newcommand{\snp}{\mathcal{S}^n_{\text{prod}}}
\newcommand{\U}{\mathcal U}

\begin{document}

\title{Parisi Formula for the ground state energy of quantum $p$-Spin Hamiltonians}
\date{}
\author{Sohom Bhattacharya\thanks{Department of Statistics, University of Florida, USA. Email: bhattacharya.s@ufl.edu}}
\maketitle

\begin{abstract}
    Quantum $p$-local spin glass Hamiltonians are natural quantum analogues of the classical spin glass models. We provide an asymptotic characterization of the maximal energy achievable by the product states. We prove that for every $p \ge 2$, the limit of ground state energy exists and is given by a Parisi-type variational formula. This settles a question left open by~\cite{anschuetz2025bounds}. The variational formula also recovers the known large-$p$ asymptotics as a consequence. Finally, we prove that the limiting product state energy is universal for a broad class of non-Gaussian interactions.
\end{abstract}

\section{Introduction}
We consider the problem of estimating the ground state energy of quantum $p$-local spin glass Hamiltonians~\cite{bray1980replica,erdHos2014phase}, which are quantum analogues of classical spin glass models~\cite{sherrington1975solvable,panchenko2013sherrington}. Informally, the Hamiltonian is given by
\begin{equation}
    H_{n,p}= \frac{1}{\sqrt{\binom{n}{p}}} \sum_{\substack{\sigma \in \mathcal{P}_n \\ \sigma \text{ is } p-\text{local}}} a_{\sigma} \sigma,
\end{equation}
where $\mathcal{P}_n$ denotes the set of Pauli matrices on $n$ qubits and $a_\sigma$ are i.i.d. standard Gaussian coefficients. A formal definition of the Hamiltonian is given by~\eqref{eq:hamiltonian_defn}. For classical spin glass models, seminal works by~\cite{talagrand2006parisi,panchenko2014parisi} characterize the maximal energy for every integer $p$ through a variational problem. For large $p$,~\cite{gamarnik2025shattering} has shown recently that as the number of spins $n$ diverges, the limit (scaled by $\sqrt{n}$) simplifies to $\sqrt{2 \log 2}$. However, the quantum analogue of this picture is not clearly understood.

In this article, we focus on the maximum energy achievable by the product states. Product states form a tractable variational class for local Hamiltonians; in several regimes, they provably achieve good approximations to the maximum energy~\cite{brandao2016product,bravyi2019approximation} and can even be optimized efficiently. In this article, we prove that this maximal product-state energy has a well-defined limit as $n \rightarrow \infty$ and this limit can be expressed explicitly by a Parisi-type variational formula. This answers a question raised by~\cite{anschuetz2025bounds}, who proved the limit exists for even $p$, while an explicit characterization of the fixed-$p$ limit was left open. We resolve this by identifying the limit for every fixed $p \ge 2$. As a further consequence, our Parisi-type functional independently recovers the large-$p$ asymptotics established in~\cite{anschuetz2025bounds}. Moving beyond Gaussian interactions, we prove a universality result, showing that the limiting maximal energy achievable by product states stays unchanged for a large class of interaction coefficients. 

Several other classes of random quantum Hamiltonians have been studied in physics and statistical mechanics, including the quantum Sherrington-Kirkpatrick~\cite{schindler2022variational}, quantum random energy model~\cite{baldwin2016many}, Sachdev-Ye-Kitaev (SYK) model~\cite{rosenhaus2019introduction}, for which thermodynamic limits, free energies, and ground-state behavior have been studied~\cite{crawford2007thermodynamics,baldwin2020quenched,manai2023spectral}. For some of these models, rigorous variational and Parisi-type formulas for the free energy have been established~\cite{adhikari2020free,manai2025parisi}. Ground-state energies of local quantum Hamiltonians also arise naturally as optimization problems, with product-state approximations, and random quantum Max-Cut being important examples~\cite{anshu2020beyond,brandao2016product,gharibian2019almost,parekh2022optimal,watts2024relaxations}. The random Pauli-string Hamiltonian and product-state variational problem considered here are different from these models. We do not pursue the sampling problem of finding a product state in polynomial time that attains the maximal energy, see~\cite{al2026circuit} for related circuit-complexity lower bounds for quantum $p$-spin Hamiltonians.

\noindent \textbf{Notation:}
For $n \in \mathbb N$, define $[n]=\{1,\ldots,n\}$. We denote $\mathbb S^2=\{x\in\mathbb R^3:\|x\|=1\}$.
For vectors $x,y\in\mathbb R^d$, $x\cdot y=x^\top y$, while for matrices $A,B\in\mathbb R^{d\times d}$, $A:B:=\operatorname{Tr}(AB^\top)$.
We write $I_d$ for the $d\times d$ identity matrix and $\mathrm{Sym}_d^+$ for the collection of $d\times d$ positive semidefinite
matrices. For symmetric matrices $A,B$, we write $A\succeq B$ if
$A-B\in\mathrm{Sym}_d^+$. We denote by $O(d)$ the $d\times d$ orthogonal matrices.
For a metric space $(T,d)$, $N(T,d,\varepsilon)$ denotes its
$\varepsilon$-covering number, and $\delta_x$ denotes the Dirac measure
at $x$.

\section{Main results}\label{sec:main}
Consider a system of $n$ qubits on the Hilbert space $\mathbb{C}^{2^n}$. A Hamiltonian is a Hermitian matrix $H \in \mathbb{C}^{2^n\times 2^n}$ satisfying $H^\dagger = H$. The Pauli matrices form a basis for the space of Hamiltonian matrices acting on $n$ qubits. For a single qubit, there are four Pauli matrices, known as Paulis $I, X, Y,Z$, given by:
\begin{align}\label{eq:define_paulis}
\sigma^0= I_2= \begin{bmatrix}
  1 & 0 \\
  0 & 1
\end{bmatrix}, \quad     
\sigma^1= \begin{bmatrix}
  0 & 1 \\
  1 & 0
\end{bmatrix}, \quad     
\sigma^2= \begin{bmatrix}
  0 & -i \\
  i & 0
\end{bmatrix} , \quad 
\sigma^3= \begin{bmatrix}
  1 & 0 \\
  0 & -1
\end{bmatrix}.
\end{align}
For $n$ qubits, the set of Pauli matrices $\mathcal{P}_n$ is the set of all $4^n$ possible $n-$fold tensor products of Pauli matrices
\begin{align}\label{eq:define_pauli_set}
    \mathcal{P}_n := \{\sigma^{a_1} \otimes \ldots \otimes \sigma^{a_n}: a_1,\ldots, a_n \in \{0,1,2,3\} \}.
\end{align}
Given an index vector $\bar \bi= (i_1, \ldots, i_p) \in [n]^p$ with distinct indices and $\bar \ba= (a_1,\ldots, a_p) \in \{0,1,2,3\}^p$, we define the Pauli matrix $P_{\bar\bi}^{\bar\ba}$ by
\begin{align}\label{eq:tensor_prod}
    P_{\bar\bi}^{\bar\ba}= \prod_{j=1}^p \sigma^{a_j}_{i_j},
\end{align}
where $\sigma^{a}_{i}= I^{\otimes (i-1)}_2 \otimes \sigma^a \otimes I^{\otimes (n-i)}_2$. For $n$ qubits, we use bra-ket notation to denote states where a ket $|\phi\rangle$ is a vector in the vector space $|\phi\rangle \in \mathbb{C}^{2^n}$, and $\langle \phi|$ denotes its conjugate transpose. Using this bra-ket notation, the set of all pure states is given by $\mathcal{S}^n:= \{|\phi\rangle \in \mathbb{C}^{2^n}: |\langle \phi| \phi \rangle|^2=1\}$. A state $|\phi\rangle$ is a product state on $n$ qubits if it can be written as a tensor product of states on its individual qubits; the set of product states on $n$ qubits is denoted as 
\begin{align}\label{eq:product_space}
    \snp := \Big\{ |\phi\rangle=  |\phi^{(1)}\rangle \otimes \ldots \otimes |\phi^{(n)}\rangle \in \mathcal{S}^n: |\phi^{(i)}\rangle \in \mathcal{S}^1 \quad \forall i \in [n]\Big\}.
\end{align}
Denote the set of ordered $p$-tuples by $\mathcal{I}^{n}=\{\bar\bi \in [n]^p: i_1< \ldots <i_p\}$. Fix integers $p\ge 2$ and $n\ge p$. The Hamiltonian of the quantum $p$- spin  is given by 
\begin{equation}\label{eq:hamiltonian_defn}
    H_{n,p}= N^{-1/2}_n \sum_{\bar\bi \in \mathcal{I}^n} \sum_{\bar \ba \in \{1,2,3\}^p} \alpha [\bar \bi, \bar \ba] P_{\bar\bi}^{\bar\ba},
\end{equation}
where $N_n:=\binom np$ and $\alpha [\bar \bi, \bar \ba]$ are i.i.d. standard normal coefficients. 

An important quantity of interest in statistical physics is the maximum energy of $H_{n,p}$ in the limit of large $n$. In this article, we consider the maximum energy when optimized over the set $\snp$ of all product states on $n$ qubits:
\begin{equation}\label{eq:define_groundstate}
    E^{\star}_{n,\text{prod}}(p)= \frac{1}{\sqrt{n}} \max_{|\phi\rangle \in \snp} \langle \phi| H_{n,p}|\phi\rangle.
\end{equation}
We characterize the limit of $E^{\star}_{n,\text{prod}}(p)$ for every integer $p \ge 2$ as $n \rightarrow \infty$. Of course, the existence of a limit is not immediate. It was shown in~\cite[Theorem 4]{anschuetz2025bounds} using a near super-additivity argument~\cite{de1951some} that the limit exists for even $p$. Further, for large $p$, the limit is shown to be $(1+o(1))\sqrt{2 \log p}$~\cite[Theorem 3]{anschuetz2025bounds}. Unfortunately, these results do not identify the limit for a fixed $p$. We address this gap by proving a Parisi-type formula for the limiting maximum energy for any $p \ge 2$.

To describe the limit, we need to define a few quantities. Let $\mathcal{U}$ denote the collection of all nonnegative, nondecreasing functions $\gamma$ on $[0,1)$ that are right continuous with $\int_0^1 \gamma(t) dt <\infty$. Let $(B_t)_{0 \le t \le 1}$ be a standard Brownian motion in $\mathbb R^3$. For $0 \le s<t \le 1$, denote by $D[s,t]$ the collection of all progressively measurable processes $u:[s,t]\mapsto \mathbb{R}^3$ with respect to the Brownian filtration and satisfying $ \|u(r)\| \le 1$ for almost every $r$. Throughout the paper, $\|.\|$ denotes the Euclidean norm. We define $\xi_{p}(q):=q^p$.
For $\gamma \in \mathcal{U}$, define
\begin{align}\label{eq:define_p}
    \Psi_{p,\gamma}(s,x):= \sup_{u \in D[s,1]}\Bigg\{\mathbb{E} &\Big[\Big\|x+ \int_{s}^1 \sqrt{\xi''_{p}(t)} dB_t+ \int_{s}^1 \xi''_p(t) \gamma(t)u(t) dt\Big\|\Big] \nonumber \\
    &- \frac{1}{2}\int_s^1\xi''_p(t) \gamma(t) \mathbb{E}(\|u(t)\|^2) dt \Bigg\}.
\end{align}
Finally, set 
\begin{align}\label{eq:define_variational}
    \bP_p(\gamma)= \Psi_{p,\gamma}(0,0)-\frac{1}{2} \int_{0}^1 t \xi''_p(t)\gamma(t)dt.
\end{align}
We now state our main result.
\begin{theorem}\label{thm:main}
    For every $p\ge 2$, we have $E^{\star}_{n,\text{prod}}- \mathbb{E} E^{\star}_{n,\text{prod}}= o_{\mathbb{P}}(1)$. Further,
\begin{equation}\label{eq:limit_thm}
E^\star_{\text{prod}}(p):=\lim\limits_{n\rightarrow\infty} \mathbb{E} (E^{\star}_{n,\text{prod}})= \inf_{\gamma \in \mathcal{U}}\bP_p(\gamma),
\end{equation}
where $E^{\star}_{n,\text{prod}}$ and $\bP_p(\gamma)$ are defined by~\eqref{eq:define_groundstate} and~\eqref{eq:define_variational} respectively. The infimum in the above display is attained.
\end{theorem}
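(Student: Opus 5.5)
The first step is to reduce the quantum problem to a classical vector spin glass on the sphere. A single-qubit pure state corresponds to a Bloch vector $x_i\in\Sph$, and $\langle\phi^{(i)}|\sigma^a|\phi^{(i)}\rangle = x_i^a$ for $a\in\{1,2,3\}$. For a product state $|\phi\rangle$ we then get
\begin{equation}
\langle\phi|H_{n,p}|\phi\rangle = N_n^{-1/2}\sum_{\bar\bi\in\In}\sum_{\bar\ba}\alpha[\bar\bi,\bar\ba]\prod_{j=1}^p x_{i_j}^{a_j}=:H_n(x).
\end{equation}
This is a centered Gaussian process on $(\Sph)^n$. Its covariance is
\begin{equation}
\E H_n(x)H_n(y)=N_n^{-1}\sum_{\bar\bi\in\In}\prod_j (x_{i_j}\cdot y_{i_j}),
\end{equation}
which equals $\frac{n}{N_n}\cdot\frac{1}{n}e_p(x_1\cdot y_1,\dots,x_n\cdot y_n)$. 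Up to an $O(1/n)$ error in the normalized covariance, this is $n\,\xi_p(R(x,y))$, where $R(x,y)=\frac1n\sum_i x_i\cdot y_i$ is the \emph{scalar} overlap. The error term involves the diagonal power sums $\frac1n\sum_i(x_i\cdot y_i)^k$ for $k\ge 2$, which are bounded.

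So $E^\star_{n,\text{prod}}$ is the normalized maximum of a $3$-dimensional vector spin glass whose spins lie on $\Sph$ and whose covariance depends only on the trace of the overlap matrix. The self-overlap is $R(x,x)=1$ automatically. I would handle the elementary-symmetric correction by Gaussian interpolation with the pure process of covariance $\frac1n\big(\sum_i x_i\cdot y_i\big)^p$. The interpolation derivative involves the covariance difference, which is $O(1)$ instead of $O(n)$, so the maxima differ by $o(\sqrt n)$ after normalization.

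Concentration, the first claim of the theorem, follows from Borell--TIS. Since $\Var H_n(x)\le C$ uniformly in $x$, the maximum concentrates at scale $O(1)$, and dividing by $\sqrt n$ gives fluctuations $o_{\mathbb P}(1)$.

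For the limit I would follow the zero-temperature route of Auffinger--Chen and Chen--Sen, in the vector-spin form of Panchenko's multi-species/vector Parisi formula. At positive temperature $\beta$, consider the free energy
\begin{equation}
F_{n,\beta}=\frac{1}{n\beta}\log\int_{(\Sph)^n}e^{\beta\sqrt n H_n(x)}\prod_i d\mu(x_i)
\end{equation}
with $\mu$ the uniform measure on $\Sph$. Its limit is given by Panchenko's Parisi formula for vector spins with constrained self-overlap. Because the covariance depends only on $\Tr R$ and the single-site measure is $O(3)$-invariant, I would argue (via invariance plus a synchronization-type argument) that the optimal Parisi matrix path is isotropic, $Q(t)=q(t)I_3/3$. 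Hence the formula reduces to a scalar functional whose one-site term is a Parisi PDE on $\R^3$ started from $\log\int_{\Sph}e^{\beta x\cdot y}\,d\mu(y)$.

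Next I let $\beta\to\infty$, so that $\frac1\beta\log\int e^{\beta x\cdot y}d\mu(y)\to\|x\|$. Rescaling the Parisi measure as $\gamma=\beta\cdot$CDF turns the Parisi PDE into the Hamilton--Jacobi--Bellman equation whose stochastic-control representation is exactly $\Psi_{p,\gamma}$ in \eqref{eq:define_p}; the linear correction term becomes $\frac12\int t\xi''_p\gamma$. The upper bound $\limsup\E E^\star\le\inf_\gamma\bP_p(\gamma)$ comes from $\max\le F_{n,\beta}+O(\log\beta/\beta)$, where the entropy loss is controlled by an $\varepsilon$-net of $(\Sph)^n$ and Lipschitz continuity of $H_n$. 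It can alternatively be obtained directly through Guerra's RSB interpolation at zero temperature. The matching lower bound comes from $F_{n,\beta}\le\max$ together with $\Gamma$-convergence of the rescaled Parisi functionals.

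Existence of a minimizer follows from lower semicontinuity and coercivity of $\bP_p$ on $\U$: taking $u=x/\|x\|$-type controls shows $\Psi_{p,\gamma}(0,0)$ grows at most like $\frac12\int\xi''_p\gamma$, so the penalty term $-\frac12\int t\xi''_p\gamma$ must be checked carefully. Coercivity on the relevant set and compactness in vague topology after truncation then give attainment.

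I expect the main obstacle to be the zero-temperature limit for vector spins. Specifically, I need to justify (i) the reduction of the matrix-valued Parisi path to the isotropic scalar one, and (ii) uniform convergence of the $\beta$-rescaled Parisi PDE solutions to $\Psi_{p,\gamma}$ together with the $\Gamma$-convergence argument giving both bounds. For (ii) I would first prove Lipschitz estimates for $\Psi_{p,\gamma}$ in $\gamma$ (in $L^1(\xi''_p\,dt)$) via the control representation, and work with step-function $\gamma$.
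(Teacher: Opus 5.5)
Your architecture matches the paper's up to and including the upper bound. That covers the Bloch reduction, Gaussian interpolation against the pure process of covariance $n\,q(s,t)^p$ (at the level of free energies, then an annealing estimate), concentration, a finite-$\beta$ vector Parisi formula, the isotropic reduction to paths $\frac{q}{3}I_3$, and the choice $\alpha=\gamma/\beta$. For the isotropic reduction no synchronization argument is needed: the matrix functional depends on $\pi$ only through $\Tr\pi$, and the endpoint is pinned to $I_3/3$ by rotational invariance. One citation point: Panchenko's vector-spin formula is for covariances $\sum_{k,k'}\xi(R_{kk'})$, which does not include $(\Tr R)^p$. The paper instead checks the hypotheses of a general vector-spin formula for convex, monotone $\xi$ on $\Splus$. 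The genuine gap is the lower bound $\lim_{\beta\to\infty}\inf_{\alpha\in\cM}\cP_{p,\beta}(\alpha)\ge\inf_{\gamma\in\U}\bP_p(\gamma)$, together with attainment. You compress both into ``$\Gamma$-convergence'', but they are the substantive part of the proof.

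\textbf{Missing equi-coercivity.} You need a bound on $\int_0^1\beta\alpha_{p,\beta}(q)\,dq$ that is uniform in $\beta$ for the finite-temperature minimizers, and nothing in your plan supplies it. The functional itself only gives $\bP_p(\gamma)\ge\frac12\int_0^1(1-t)\xi_p''(t)\gamma(t)\,dt$, via the control $u=X/\|X\|$. That controls $\int(1-t)\gamma$ but not $\int\gamma$, so limits of minimizing sequences may leave $\U$. This is exactly the unresolved ``penalty term must be checked carefully'' in your attainment paragraph.

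The paper gets the bound from the $\beta$-derivative. Panchenko's differentiability argument, with a second-order estimate uniform in $\alpha$ obtained from the cascade representation, gives $A_p'(\beta)=p\int_0^1q^{p-1}\gamma_{p,\beta}(q)\,dq$, where $\gamma_{p,\beta}=\beta\alpha_{p,\beta}$. Convexity of $A_p$ and the already-proved upper bound $A_p(\beta)\le C\beta$ give $A_p'(\beta)\le 2C$. Monotonicity of $\gamma_{p,\beta}$ then yields $\int_0^1\gamma_{p,\beta}\le C_p$.

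\textbf{Missing treatment of mass escaping to $q=1$.} Even with the $L^1$ bound, Helly's theorem only gives, along subsequences, $\xi_p''\gamma_{p,\beta}\,dq\Rightarrow\xi_p''\gamma\,dq+\upsilon\delta_1$. Mass can escape to $q=1$, for example $\gamma_\beta=\beta\mathbf 1_{[1-1/\beta,1)}$. So there is no $L^1$ convergence, and your planned $L^1(\xi_p''dt)$-Lipschitz estimates and step-function approximation cannot give the liminf inequality. Meanwhile the penalty term picks up an extra $\upsilon/2$.

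What is needed is the endpoint-gain inequality $\liminf_{\beta}\Psi_{p,\beta,\alpha_{p,\beta}}(0,0)\ge\Psi_{p,\gamma}(0,0)+\upsilon/2$. The paper proves it by testing the control representation with an arbitrary $u$ on $[0,\varepsilon)$ and a smoothed radial control $g_n(\cdot)\approx x/\|x\|$ on $[\varepsilon,1]$. The atom then adds $\upsilon$ to the norm at cost only $\upsilon/2$, which cancels the extra penalty and gives $\lim_\beta A_p(\beta)/\beta\ge\bP_p(\gamma)$ for the limit point $\gamma\in\U$.

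Combined with the upper bound, this also shows that $\gamma$ attains the infimum. That is how attainment is actually obtained; your direct lower-semicontinuity and coercivity argument does not establish it.
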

Theorem~\ref{thm:main} characterizes the maximum energy attainable by product states. The proof also shows that there exists $\gamma^\star \in \mathcal{U}$ such that $E^\star_{\text{prod}}= \mathcal{P}_{p}(\gamma^\star)$.  
We do not pursue the sampling problem of finding a product state in polynomial time that attains the maximal energy. The proof proceeds by reducing the optimization problem to a maximization problem on $(\mathbb S^2)^n$ through the Bloch-sphere representation~\cite{nielsen2000quantum}. We then compare the reduced problem with a vector spin-glass maximization problem and invoke the finite-temperature Parisi formula~\cite{chen2025free}. We next show, using rotational symmetry, that the resulting matrix-valued variational problem reduces to a scalar Parisi formula. To achieve the zero-temperature limit, we adapt the argument of~\cite{auffinger2015parisi} for our setting, completing the proof.

Theorem~\ref{thm:main} leads to an alternative proof of the large $p$ asymptotics of $E^{\star}_{\text{prod}}(p)$, recovering~\cite[Theorem 3]{anschuetz2025bounds}.
\begin{proposition}\label{cor:large_p}
Suppose we are in the setup of Theorem~\ref{thm:main}. Then we have,
$$
\lim\limits_{p \rightarrow \infty}\frac{E^{\star}_{\text{prod}}(p)}
{\sqrt{2\log p}}=1.$$
\end{proposition}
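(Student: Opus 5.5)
The plan is to prove matching upper and lower bounds of order $(1\pm\varepsilon)\sqrt{2\log p}$. The upper bound comes from the variational formula of Theorem~\ref{thm:main} with an explicit one-step test function $\gamma$. The lower bound is more naturally handled at finite $n$, through a random-energy-model style argument on a discretized Bloch sphere, and is then transferred to the limit using Theorem~\ref{thm:main}.

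\emph{Upper bound.} Take $\gamma(t)=\lambda\,\mathbbm{1}_{[q,1)}(t)$ with $q=1-a/p$ and $a=(1+\varepsilon)\log p$; note this $\gamma$ is nonnegative, nondecreasing and right continuous, so $\gamma\in\U$. On $[0,q)$ there is no drift, so $\Psi_{p,\gamma}(0,0)=\E\,\Psi_{p,\gamma}(q,G_q)$, where $G_q=\int_0^q\sqrt{\xi_p''}\dd B$ is a centred Gaussian vector with covariance $\xi_p'(q)I_3$. On $[q,1)$ I drop the constraint $\|u\|\le 1$, which can only increase the supremum. The Bou\'e--Dupuis variational formula then gives
\begin{equation}
\Psi_{p,\gamma}(q,x)\le \lambda^{-1}\log \E\exp\big(\lambda\|x+Z\|\big),
\end{equation}
where $Z\sim N(0,vI_3)$ and $v=\xi_p'(1)-\xi_p'(q)\le p$. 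The rotation-invariant radial density $r^2e^{-r^2/2v}$ gives
\begin{equation}
\lambda^{-1}\log\E e^{\lambda\|x+Z\|}\le \|x\|+\tfrac{\lambda v}{2}+\lambda^{-1}\log\big(C(1+\lambda^2 v)\big).
\end{equation}
Subtracting the penalty $\frac{\lambda}{2}\int_q^1 t\xi_p''(t)\dd t=\frac{\lambda}{2}\big[(p-1)-(q\xi_p'(q)-\xi_p(q))\big]$ and using $q^{p-1}\approx e^{-a}\to0$, we obtain
\begin{equation}
\bP_p(\gamma)\le \sqrt{3\xi_p'(q)}+\frac{\lambda}{2}\big(1+o(1)\big)+\frac{\log p+2\log\lambda+C}{\lambda}.
\end{equation}
Here $\xi_p'(q)=pq^{p-1}\approx p^{-\varepsilon}\to0$. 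Choosing $\lambda=\sqrt{2\log p}$ yields $\limsup_p E^\star_{\text{prod}}(p)/\sqrt{2\log p}\le 1+\varepsilon$. The one point to check carefully is the normalization of $B$ relative to the covariance $\E\langle\phi|H|\phi\rangle\langle\phi'|H|\phi'\rangle=n R^p$, where $R=n^{-1}\sum_i x_i\cdot y_i$ is the Bloch-vector overlap. If the factor is $\xi''/3$ per coordinate, the constants shift but the leading $\log p$ entropy term from the three-dimensional radial factor $\lambda^2v$ is unchanged.

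\emph{Lower bound.} I use the Bloch-sphere reduction: the product-state energy equals $n^{-1/2}$ times a centred Gaussian field on $(\Sph)^n$ with covariance $n R^p$. Fix a maximal $\delta$-separated set $\mathcal N\subset\Sph$ with $\delta=\sqrt{K\log p/p}$, so that $x\cdot y\le 1-cK\log p/p$ for distinct $x,y\in\mathcal N$ and $\log|\mathcal N|=(1-o(1))\log p$. Restricting to $\mathcal N^n$ gives a lower bound. For two configurations in $\mathcal N^n$ agreeing on a fraction $\theta$ of sites, the overlap is at most $\theta+(1-\theta)(1-cK\log p/p)$. I then plan to run the standard REM second-moment argument for the number of configurations with energy at least $(1-\varepsilon)\sqrt{2\log|\mathcal N|}\,\sqrt n$, which succeeds once $R^p$ is small for typical overlaps. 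To control every overlap level $\theta$, I would use a Slepian/Sudakov--Fernique comparison with a generalized REM whose covariance dominates $R^p$ at each $\theta$. Combined with the concentration statement in Theorem~\ref{thm:main}, this gives $\liminf_p E^\star_{\text{prod}}(p)/\sqrt{2\log p}\ge 1-\varepsilon$.

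The main obstacle is this lower bound, specifically controlling pairs with intermediate agreement fraction $\theta$. There $R^p\approx(\theta+(1-\theta)(1-cK\log p/p))^p$ is not negligible, and the entropy cost $\binom{n}{\theta n}|\mathcal N|^{(1-\theta)n}$ must beat the correlation gain. I would first try the comparison with a generalized REM (Derrida's GREM) with levels indexed by $\theta$, choosing $K$ large so that $(1-cK\log p/p)^p\le p^{-cK}$. This makes every level beyond $\theta\le 1-1/\log p$ contribute only $o(\log p)$ to the free energy, leaving the first (REM) level dominant.
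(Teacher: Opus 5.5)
\textbf{Upper bound.} Your upper bound is correct and is essentially the paper's argument. The paper takes $\gamma\equiv m$, which is your test function with $q=0$, drops the constraint $\|u\|\le1$, and applies Bou\'e--Dupuis exactly as you do. The step at $q=1-a/p$ buys nothing. Indeed, $\frac{\lambda v}{2}$ minus the penalty equals $\frac{\lambda}{2}\bigl[1-q^{p-1}\bigl(p-(p-1)q\bigr)\bigr]\le\frac{\lambda}{2}$ for every $q\in[0,1]$, and the extra term $\sqrt{3\xi_p'(q)}$ vanishes at $q=0$. So you get $\limsup_p E^\star_{\mathrm{prod}}(p)/\sqrt{2\log p}\le1$ with no $\varepsilon$. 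The normalization worry is moot: in \eqref{eq:define_p}, $B$ is a standard Brownian motion in $\R^3$. That normalization is exactly what produces this cancellation; a different one would not merely shift constants.

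\textbf{Lower bound: there is a genuine gap.} The decisive second-moment estimate is never carried out, and the mechanism you sketch does not close as stated.

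(i) The agreement fraction $\theta(s,t)=n^{-1}\#\{i:s_i=t_i\}$ on $\mathcal N^n$ is not ultrametric, so no GREM has covariance depending only on $\theta$. What you can use is a Potts-type field $G$ with covariance $nf(\theta)$, where $f$ has nonnegative Taylor coefficients, e.g.\ $f(\theta)=(1-\kappa+\kappa\theta)^p$ with $\kappa=cK\log p/p$. Sudakov--Fernique then gives $\E\max_{\mathcal N^n}Y_{n,p}\ge\E\max_{\mathcal N^n}G$. Work with $Y_{n,p}$, whose covariance is exactly $nR^p$, and transfer via Lemma~\ref{lem:annealing}.

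(ii) Your separation bound is one-sided. For even $p$ the covariance is $n|R|^p$, and near-antipodal points of $\mathcal N$ give pairs with $\theta=0$ but $R^p\approx1$, so the domination fails. You need $|x\cdot y|\le1-\kappa$ for distinct $x,y\in\mathcal N$, i.e.\ a net of the projective plane.

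(iii) Most importantly, pairs with no agreement can still have overlap $1-O(\kappa)$. Hence any such majorant has $f(1/|\mathcal N|)\ge f(0)\ge p^{-O(K)}>0$ at the typical agreement level, where the entropy cost vanishes. Set $N=\#\{s\in\mathcal N^n:G(s)\ge\ell n\}$ with $\ell=(1-\varepsilon)\sqrt{2\log|\mathcal N|}$. Then $\E N^2/(\E N)^2$ is exponentially large in $n$, and Paley--Zygmund only gives $\mathbb P(\max G\ge\ell n)\ge e^{-c'n}$. The $o_{\mathbb P}(1)$ concentration in Theorem~\ref{thm:main} cannot turn this into a bound on $\E E^\star_{n,\mathrm{prod}}$.

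What closes the argument is Gaussian concentration at scale $n$: $\mathbb P(|\max G-\E\max G|\ge\tau n)\le2e^{-\tau^2n/2}$. This yields $n^{-1}\E\max G\ge\ell-\sqrt{2c'}-o(1)$. The second-moment exponent is at most $2\log p\cdot p^{-cK/2}$ for $\theta\le1/2$. It is nonpositive for $\theta\ge1/2$ once $p$ is large, since there the cost is at least $\theta\log|\mathcal N|-\log2$. Hence $\sqrt{2c'}\to0$ as $p\to\infty$. Alternatively, drop the majorant and count pairs by their true overlap $R$, via Cram\'er's theorem for the i.i.d.\ variables $s_i\cdot t_i$ with a symmetric net, so that typical pairs have $R\approx0$.

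\textbf{Comparison with the paper.} The paper's lower bound never leaves the variational formula. For bounded $\gamma$ it solves $dX_t=\sqrt{\xi_p''(t)}\,dB_t+\xi_p''(t)\gamma(t)X_t/\|X_t\|\,\dd t$ and inserts the control $u(t)=X_t/\|X_t\|$ into \eqref{eq:define_p}. The It\^o formula for $\|X_t\|$ has Bessel drift $\xi_p''/\|X_t\|$, which supplies the entropy term. Together with Jensen this gives $\bP_p(\gamma)\ge\frac M2+\int_0^1\xi_p''(t)\{\sqrt{3\xi_p'(t)}+A(t)\}^{-1}\dd t$, where $A(t)=\int_0^t\xi_p''\gamma$ and $M=\int_0^1(1-t)\xi_p''(t)\gamma(t)\dd t$. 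A one-parameter minimization in $M$ finishes. That proof is short, uniform in $\gamma$, and shows the Parisi functional itself encodes the $\sqrt{2\log p}$ law. Your route, once repaired, uses Theorem~\ref{thm:main} only through $E^\star_{\mathrm{prod}}(p)=\lim_n\E E^\star_{n,\mathrm{prod}}$. It explains the answer as REM behaviour over roughly $p/\log p$ effectively distinguishable single-qubit states, at the cost of a real second-moment computation and exponential concentration.
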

Next, we demonstrate universality of the asymptotic maximal energy, showing that it remains independent of the distribution of the coefficients. To this end for $p\ge2$, with coefficients $\alpha'=(\alpha'[\bar \bi,\bar \ba])$, $\bar i \in \mathcal{I}^n$, $\bar a \in [3]^p$, define
\begin{equation}\label{eq:model}
\begin{split}
H_{n,p}(\alpha')&=N_n^{-1/2}\sum_{\bar \bi\in\In}\sum_{\bar \ba\in\{1,2,3\}^p}
 \alpha'[\bar i,\bar a]P_{\bar i}^{\bar a},\\
E^\star_{n,\mathrm{prod}}(p;x)&=\frac1{\sqrt n}
\max_{|\phi\rangle\in\snp}\langle\phi|H_{n,p}(x)|\phi\rangle.
\end{split}
\end{equation}
When $\alpha'[\bar\bi,\bar\ba]=\alpha[\bar\bi,\bar\ba]$ are i.i.d. Gaussian coefficients, we get~\eqref{eq:hamiltonian_defn}.
A precise version of our universality result is given below:
\begin{proposition}[Universality]\label{prop:universality}
Fix an integer $p \ge 2$, let the coefficients $\alpha'[\bar \bi,\bar \ba]$ in~\eqref{eq:model} be independent, with
$\E\alpha'[\bar \bi,\bar \ba]=0$ and $\E\alpha'[\bar \bi,\bar \ba]^2=1$.
Suppose that, for every $\varepsilon>0$,
\begin{equation}\label{eq:lindeberg-assumption}
L_n(\varepsilon):=\frac1{N_n}\sum_{\bar \bi\in\In}\sum_{\bar \ba\in\{1,2,3\}^p}
\E\!\left[\alpha'[\bar \bi,\bar \ba]^2\,;
 |\alpha'[\bar \bi,\bar \ba]|>\varepsilon n^{(p-1)/2}\right]
\longrightarrow0.
\end{equation}
Let $\alpha$ have independent standard Gaussian coordinates. Then
\begin{equation}\label{eq:universality}
\left|\E E^\star_{n,\mathrm{prod}}(p;\alpha')
      -\E E^\star_{n,\mathrm{prod}}(p;\alpha)\right|\longrightarrow0.
\end{equation}
Consequently, we have
\begin{equation}\label{eq:parisi-conclusion}
E^\star_{n,\mathrm{prod}}(p;\alpha')
\longrightarrow E^\star_{\mathrm{prod}}(p)
=\inf_{\gamma\in\U}\mathcal{P}_p(\gamma)
\end{equation}
in probability, where $\mathcal{P}_p$ is defined in~\eqref{eq:define_variational}.
\end{proposition}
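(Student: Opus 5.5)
We reduce to the Bloch sphere and then apply a Lindeberg replacement to a smoothed (soft-max) version of the energy. For a product state $|\phi\rangle=\otimes_i|\phi^{(i)}\rangle$ with Bloch vectors $x_i\in\mathbb S^2$, we have $\langle\phi^{(i)}|\sigma^a|\phi^{(i)}\rangle=x_i^a$. Hence
\[
\sqrt n\,E^\star_{n,\mathrm{prod}}(p;\alpha')=\max_{x\in(\mathbb S^2)^n}F(\alpha',x),\qquad F(\alpha',x)=N_n^{-1/2}\sum_{\bar\bi\in\In}\sum_{\bar\ba}\alpha'[\bar\bi,\bar\ba]\prod_{j}x_{i_j}^{a_j}.
\]
The map $x\mapsto F(\alpha',x)$ is multilinear in the coordinates. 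We introduce the free energy
\[
\Phi_\beta(\alpha')=\frac{1}{\beta n}\log\int_{(\mathbb S^2)^n}\exp\bigl(\beta\sqrt n\,F(\alpha',x)/\sqrt n\bigr)\,d\mu^{\otimes n}(x),
\]
with $\mu$ the uniform measure on $\mathbb S^2$, scaled so that $\Phi_\beta$ approximates $E^\star_{n,\mathrm{prod}}$.

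\textbf{Step 1: smoothing error.} We show $|\Phi_\beta-E^\star_{n,\mathrm{prod}}|\le C(\log\beta)/\beta+o(1)$ uniformly in $n$, for both coefficient families. The upper bound $\Phi_\beta\le E^\star$ is trivial. For the lower bound we restrict the integral to a neighbourhood of the maximiser of radius $\delta$ in each coordinate. This needs a Lipschitz bound on $F/\sqrt n$ with respect to the normalized $\ell^2$ distance on $(\mathbb S^2)^n$. By multilinearity, that Lipschitz constant is controlled by $\sup_{x}\|\nabla F\|/\sqrt n$, which is at most $n^{-1/2}$ times an injective-norm bound of the random tensor. For Gaussians this is $O(1)$ with high probability. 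For $\alpha'$ we first truncate at level $\varepsilon n^{(p-1)/2}$, which by \eqref{eq:lindeberg-assumption} changes $\E E^\star$ by $o(1)$ in $L^2$ via Cauchy--Schwarz and $\E\sup_x|F|^2$ bounds. After truncation, a moment or $\varepsilon$-net bound gives the same $O(1)$ operator-norm estimate in expectation. Volume of a $\delta$-ball on $\mathbb S^2$ then gives the $(\log\beta)/\beta$ error.

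\textbf{Step 2: Lindeberg interpolation for fixed $\beta$.} We replace the coordinates $\alpha'[\bar\bi,\bar\ba]$ by Gaussians one at a time and Taylor expand $\Phi_\beta$ to third order. First and second moments match, so each swap costs at most the expected third derivative times $|\alpha'|^3$ (in the truncated region) plus second-order terms on the tail. The derivative $\partial\Phi_\beta/\partial\alpha'[\bar\bi,\bar\ba]$ equals $(nN_n^{1/2})^{-1}\langle\prod_j x_{i_j}^{a_j}\rangle$, a Gibbs average of absolute value at most $1$. Higher derivatives are Gibbs cumulants bounded by $C\beta^{k-1}n^{-1}N_n^{-k/2}$. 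Summing over the $3^pN_n$ coordinates, the third-order term is bounded by $C\beta^2 n^{-1}N_n^{-1/2}\sum\E[|\alpha'|^3;|\alpha'|\le\varepsilon n^{(p-1)/2}]\le C\beta^2\varepsilon\, n^{(p-1)/2}N_n^{-1/2}n^{-1}\cdot N_n$. This is exactly the form in which the normalization $\varepsilon n^{(p-1)/2}$ is designed to produce $C_p\beta^2\varepsilon$. The tail contribution is handled by the second-derivative bound and is $\le C\beta L_n(\varepsilon)$. Letting $n\to\infty$, then $\varepsilon\to0$, then $\beta\to\infty$ together with Step 1 gives \eqref{eq:universality}.

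\textbf{Step 3: conclusion.} The limit $\E E^\star_{n,\mathrm{prod}}(p;\alpha')\to\inf_\gamma\bP_p(\gamma)$ follows from \eqref{eq:universality} and Theorem~\ref{thm:main}. Convergence in probability requires concentration of $E^\star(p;\alpha')$ around its mean. The map $\alpha'\mapsto E^\star$ is convex and $O(n^{-1/2})$-Lipschitz in Euclidean norm, since $\sum\prod_j (x_{i_j}^{a_j})^2\le N_n$. After truncation at $K$ and letting $K$ grow, Talagrand's convex-distance inequality gives fluctuations of order $K n^{-1/2}$. We take $K$ so that this is $o(1)$ while the truncation error stays $o(1)$ by \eqref{eq:lindeberg-assumption}; if that balance fails, we fall back on the Efron--Stein inequality with the derivative bound from Step 2.

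The main obstacle is Step 1 in the non-Gaussian setting. The required $O(1)$ bound on $\E\sup_x\|\nabla F\|/\sqrt n$ for merely truncated, Lindeberg-type coefficients needs a careful moment or chaining argument, because the truncation level $\varepsilon n^{(p-1)/2}$ is large. We would handle it by comparing with the Gaussian supremum through the same Lindeberg scheme applied to a smoothed maximum over an $\varepsilon$-net.
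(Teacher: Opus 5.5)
\textbf{The gap is your Step 1, the smoothing-error bound for the non-Gaussian family.} The route you propose for it fails under the stated hypotheses. You control the modulus of continuity of $s\mapsto F(\alpha',s)$ by $\sup_s\|\nabla F\|$, and that by an injective (operator) norm of the truncated coefficient tensor. You then claim this is at the Gaussian scale after truncating at $\varepsilon n^{(p-1)/2}$. But truncation at that level can remove nothing.

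Here is a counterexample for $p=2$. Let each coefficient equal $\pm\sqrt{n/d}$ with probability $d/(2n)$ each, and $0$ otherwise, where $d=d_n\to\infty$ slowly (say $d=\log\log n$).
\begin{itemize}
\item The coefficients are centered with unit variance, and $L_n(\varepsilon)=0$ for all large $n$.
\item The $\sigma^1\otimes\sigma^1$ coefficients alone form a sparse matrix with a vertex of degree $\Delta\asymp\log n/\log\log n\gg d$.
\item Hence the coefficient matrix $W$ has operator norm of order at least $\sqrt{\Delta n/d}\gg\sqrt n$, so your Lipschitz bound $\sqrt{n/N_n}\,\|W\|$ diverges.
\end{itemize}
The product-state maximum does stay bounded here; this is the sparse regime the proposition covers. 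But a Lipschitz-based proof would need a delicate bound on $\sup_{s\in(\Sph)^n}\|\nabla F(s)\|$ that exploits the delocalization of $s$, and you do not supply one. Your fallback, running Lindeberg on a smoothed maximum over a net, needs a smoothing estimate of exactly this kind, so it is circular.

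\textbf{The missing idea: a deterministic sandwich that needs no regularity of the random field.}
\begin{itemize}
\item Fix any coefficient vector $c$. Write $X^c(s)=\sqrt{n/N_n}\sum_j c_j\varphi_j(s)$ with $\varphi_j=\prod_r s_{i_r}^{a_r}$, $M_n(c)=n^{-1}\max_s X^c(s)$, and $F_{n,\beta}(c)=(n\beta)^{-1}\log\int e^{\beta X^c}\,d\nu^{\otimes n}$.
\item Fix $s\in(\Sph)^n$. Let $C_i=\{u\in\Sph: u\cdot s_i\ge 1-\delta\}$, so $\nu(C_i)=\delta/2$, and let the $T_i$ be independent with law $\nu(\cdot\mid C_i)$. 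Then $\E T_i=(1-\delta/2)s_i$.
\item Each monomial $\varphi_j$ involves $p$ distinct sites, so multilinearity gives $\E_T X^c(T)=(1-\delta/2)^p X^c(s)$.
\item Restricting the partition function to $\prod_i C_i$ and applying Jensen gives, for every $c$,
\[
(1-\delta/2)^p M_n(c)-\beta^{-1}\log(2/\delta)\le F_{n,\beta}(c)\le M_n(c).
\]
\item The error is multiplicative in $M_n$. So you only need $\sup_n \E M_n(\alpha)<\infty$ for the Gaussian family, which the annealing lemma gives. The bound on $\E M_n(\alpha')$ then comes out of the sandwich combined with the Lindeberg comparison of $F_{n,\beta}$. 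Taking $\delta=\beta^{-1/2}$ finishes the argument.
\end{itemize}

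\textbf{Two repairs needed in your Steps 2 and 3.} With the sandwich replacing Step 1, these steps are essentially the paper's once the following are fixed.
\begin{itemize}
\item \emph{Normalization.} The exponent must be $\beta\sqrt n\,F=\beta X$; as written, your $\Phi_\beta$ approximates $E^\star/\sqrt n$. The correct derivatives are $\partial_j\Phi_\beta=\langle\varphi_j\rangle/\sqrt{nN_n}$, $|\partial_j^2\Phi_\beta|\le\beta/N_n$ and $|\partial_j^3\Phi_\beta|\le 8\beta^2\sqrt n\,N_n^{-3/2}$. These give a third-order total $\lesssim 3^p\beta^2\varepsilon\, n^{p/2}N_n^{-1/2}=O_p(\beta^2\varepsilon)$. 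Your displayed arithmetic does not produce this.
\item \emph{Concentration.} Use your Efron--Stein fallback directly: it gives $\Var(E^\star_{n,\mathrm{prod}}(p;\alpha'))\le 3^p/n$ with no truncation. Talagrand after truncation at $\varepsilon n^{(p-1)/2}$ only gives fluctuations of order $\varepsilon n^{(p-2)/2}$, which is useless for $p\ge 3$.
\end{itemize}
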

Proposition~\ref{prop:universality} applies to sparse random hypergraphs 
if the average degree diverges with $n$. 
For the classical SK model, universality of ground-state energy is well-known~\cite{carmona2006universality}. A general invariance principle was developed by~\cite{chatterjee2005simple} and was applied to spin glasses, achieving a comparison mechanism used in many later universality arguments. For the quantum Hamiltonians,~\cite{anschuetz2025bounds} proves universality for maximum energy over all pure states, including entangled states. In contrast, Proposition~\ref{prop:universality} focuses on product states. Coupled with Theorem~\ref{thm:main}, it yields the precise limit of maximal energy through the Parisi functional. The proof of~\cite{anschuetz2025bounds} approximates $\lambda_{\max}(H_n)$ with $\beta^{-1}\log \tr e^{\beta H_n}$, which does not approximate the maximum over product states. We therefore include a separate proof for universality in Section~\ref{sec:proofs}.

\section{Proofs}\label{sec:proofs}
In this section, we prove all the results from Section~\ref{sec:main}. Throughout the proofs, $C>0$ will denote an arbitrary constant whose value will change.

\begin{proof}[Proof of Theorem~\ref{thm:main}]
 A pure one-qubit state $|\phi\rangle$ is represented by its Bloch vector~\cite[Section 1.2]{nielsen2000quantum}. Namely, there exists a unique $s= (s^1,s^2,s^3)\in \mathbb{S}^2$ such that  
\begin{align}\label{eq:bloch}
    |\phi\rangle\langle\phi|= \frac{1}{2}(I_2 +\sum_{a=1}^3 s^a \sigma^a), \qquad s^a= \langle\phi|\sigma^a|\phi\rangle.
\end{align}
Hence, a product state $|\phi\rangle =|\phi^{(1)}\rangle \otimes \ldots \otimes |\phi^{(n)}\rangle \in \snp$ is parametrized by $s=(s_1,\ldots,s_n) \in (\mathbb{S}^2)^n$. This gives the following reduction to classical Hamiltonian.

\begin{lemma}\label{lem:reduction}
    Let $|\phi\rangle = \otimes_{i=1}^n |\phi^{(i)}\rangle \in \snp$, and $s_i\in \mathbb{S}^2$ be the Bloch vector of $|\phi^{(i)}\rangle$. Then, 
    \begin{equation}\label{eq:x_proc_reduce}
        X_{n,p}(s)= \sqrt{n} \langle\phi|H_{n,p}|\phi\rangle,
    \end{equation}
    where we define
    \begin{equation}\label{eq:define_x_process}
        X_{n,p}(s)= \sqrt{\frac{n}{N_n}} \sum_{\bar\bi\in \mathcal{I}^n} \sum_{\bar \ba \in \{1,2,3\}^p} \alpha [\bar \bi, \bar \ba] \prod_{j=1}^p s^{a_j}_{i_j}.
    \end{equation}
    Further, every $s\in(\mathbb{S}^2)^n$ arises from some state in $\snp$. Consequently, 
    \begin{equation}\label{eq:to_show}
        E^{\star}_{n,\text{prod}}(p)= \frac{1}{n} \Big[\max_{s\in(\mathbb{S}^2)^n} X_{n,p}(s)\Big].
    \end{equation}
Finally, $E^{\star}_{n,\text{prod}}(p)- \mathbb{E}(E^{\star}_{n,\text{prod}}(p))=o_{\mathbb{P}}(1)$.
\end{lemma}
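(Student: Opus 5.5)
The plan has three parts: compute the energy of a product state via Bloch vectors, show that every point of the product of spheres is realized, and derive concentration from Gaussian Lipschitz concentration.

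\textbf{Energy of a product state.} For $|\phi\rangle=\otimes_i|\phi^{(i)}\rangle$ and a Pauli string $P_{\bar\bi}^{\bar\ba}=\prod_{j}\sigma^{a_j}_{i_j}$ with distinct indices $i_1<\dots<i_p$, the operator acts as $\sigma^{a_j}$ on qubit $i_j$ and as the identity elsewhere. Hence the expectation factorizes:
\[
\langle\phi|P_{\bar\bi}^{\bar\ba}|\phi\rangle=\prod_{j=1}^p\langle\phi^{(i_j)}|\sigma^{a_j}|\phi^{(i_j)}\rangle\prod_{i\notin\bar\bi}\langle\phi^{(i)}|\phi^{(i)}\rangle=\prod_{j=1}^p s^{a_j}_{i_j},
\]
where the last equality uses \eqref{eq:bloch} and normalization. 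Summing with the weights $N_n^{-1/2}\alpha[\bar\bi,\bar\ba]$ and multiplying by $\sqrt n$ gives \eqref{eq:x_proc_reduce}.

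\textbf{Surjectivity.} For any $s\in\mathbb S^2$, the matrix $\rho=\frac12(I_2+\sum_a s^a\sigma^a)$ is Hermitian with trace $1$ and eigenvalues $(1\pm\|s\|)/2=\{0,1\}$. It is therefore a rank-one projector $|\phi\rangle\langle\phi|$, and $\Tr(\rho\sigma^a)=s^a$ because $\Tr(\sigma^a\sigma^b)=2\delta_{ab}$. Taking tensor products of such states realizes every $s\in(\mathbb S^2)^n$. Since the global phase is irrelevant, the maximum over $\snp$ equals the maximum over $(\mathbb S^2)^n$, which gives \eqref{eq:to_show}. The maximum is attained because $X_{n,p}$ is continuous on a compact set.

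\textbf{Concentration.} I will view $F(\alpha)=\frac1n\max_s X_{n,p}(s)$ as a function of the Gaussian vector $\alpha\in\mathbb R^{3^pN_n}$. For fixed $s$, the map $\alpha\mapsto X_{n,p}(s)/n$ is linear with coefficient vector $n^{-1/2}N_n^{-1/2}\big(\prod_j s^{a_j}_{i_j}\big)$. Its squared norm is
\[
\frac{1}{nN_n}\sum_{\bar\bi}\sum_{\bar\ba}\prod_j (s^{a_j}_{i_j})^2=\frac{1}{nN_n}\sum_{\bar\bi}\prod_j\|s_{i_j}\|^2=\frac1n.
\]
A maximum of $1/\sqrt n$-Lipschitz functions is $1/\sqrt n$-Lipschitz, so Gaussian concentration (Tsirelson--Ibragimov--Sudakov) yields
\[
\mathbb P\big(|F-\E F|>t\big)\le 2e^{-nt^2/2}.
\]
This gives $o_{\mathbb P}(1)$, and in fact exponential concentration. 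Integrability of $F$ follows from $|F|\le n^{-1/2}N_n^{-1/2}\sum|\alpha|$.

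The main obstacle is mild. Surjectivity of the Bloch map needs only the eigenvalue computation above. The Lipschitz constant uses the identity $\sum_{a}(s^a)^2=1$ factorizing over the $p$ sites, so the distinctness of the indices within $\bar\bi$ is the point to keep in view, both there and in the factorization of $\langle\phi|P_{\bar\bi}^{\bar\ba}|\phi\rangle$.
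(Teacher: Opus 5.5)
Your proof is correct and follows the paper's route: the factorization of $\langle\phi|P_{\bar\bi}^{\bar\ba}|\phi\rangle$ over the distinct sites, surjectivity of the Bloch map, and Gaussian concentration. Where the paper only cites these facts, you prove them: the eigenvalue computation shows $\frac12(I_2+\sum_a s^a\sigma^a)$ is a rank-one projector, and the Lipschitz constant $n^{-1/2}$ yields the explicit bound $2e^{-nt^2/2}$ in place of the reference to Proposition 10 of Anschuetz et al. (a small correction: distinctness of the indices is not needed for the Lipschitz computation, since $\sum_{\bar\ba}\prod_j (s^{a_j}_{i_j})^2=\prod_j\|s_{i_j}\|^2$ holds regardless; it is needed only for the factorization).
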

\begin{proof}[Proof of Lemma~\ref{lem:reduction}]
    For $\bar \bi\in \mathcal{I}^n$ and $\bar \ba \in [3]^p$, we have
    \begin{align*}
        \langle \phi|P^{\bar \ba}_{\bar \bi}|\phi \rangle= \prod_{j=1}^p  \langle \phi^{(i_j)}|\sigma^{a_j}|\phi^{(i_j)} \rangle= \prod_{j=1}^{p}s^{a_j}_{i_j}.
    \end{align*}
This proves~\eqref{eq:x_proc_reduce}. The converse follows from the definition~\eqref{eq:bloch}. Therefore, optimizing over $\snp$ is equivalent to maximizing $X_{n,p}(s)$ over ${s\in(\mathbb{S}^2)^n}$.
Finally, by standard Gaussian concentration, we have~\cite[Proposition 10]{anschuetz2025bounds},  $E^{\star}_{n,\text{prod}}(p)- \mathbb{E}(E^{\star}_{n,\text{prod}}(p))=o_{\mathbb{P}}(1).$ This completes the proof of the lemma.
\end{proof}
The above lemma turns optimization over $\snp$ into maximizing over $(\mathbb{S}^2)^n$. Next, we construct a centered Gaussian process $Y_{n,p}$. For $s=(s_1,\ldots, s_n)$, $t=(t_1,\ldots, t_n) \in (\mathbb{S}^2)^n$, define
\begin{align}\label{eq:overlap}
    R(s,t)= \frac{1}{n}\sum_{i=1}^n s_it^\top_i, \quad q(s,t)= \tr R(s,t), \quad Q(s)= R(s,s).
\end{align}
Let $Y_{n,p}$ be a centered Gaussian process on $(\mathbb{S}^2)^n$ with covariance 
\begin{equation}\label{eq:y_cov}
    \mathbb{E} Y_{n,p}(s)Y_{n,p}(t)= n \xi_p(q(s,t))= n (q(s,t))^p.
\end{equation}
The following lemma connects $X_{n,p}$ with the Gaussian process $Y_{n,p}$. For $\beta>0$ and let $\nu$ be the normalized surface measure on $\mathbb{S}^2$, define
\begin{align}\label{eq:define_ax}
    A^X_{n,p}(\beta)
:=
\frac1n
\mathbb E\log
\int_{(\mathbb{S}^2)^n}
\exp\{\beta X_{n,p}(s)\}
\,\nu^{\otimes n}(ds),
\end{align}
and define $A^Y_{n,p}(\beta)$ analogously.

\begin{lemma}\label{lem:x_covar}
Let
$|\phi\rangle
=
|\phi^{(1)}\rangle\otimes\cdots\otimes|\phi^{(n)}\rangle$
and
$|\psi\rangle
=
|\psi^{(1)}\rangle\otimes\cdots\otimes|\psi^{(n)}\rangle$
be product states in $\snp$, and let $s_i,t_i\in \mathbb{S}^2$ denote the
Bloch vectors of $|\phi^{(i)}\rangle$ and $|\psi^{(i)}\rangle$,
respectively. Then
\begin{enumerate}
    \item[(i)] The covariance of $X_{n,p}$ is 
    \begin{align}\label{eq:exact-x-covariance}
    \mathbb E X_{n,p}(s)X_{n,p}(t)
=
\frac{n}{N_n}
\sum_{\bar \bi=(i_1,\ldots,i_p)\in \mathcal{I}^n}
\prod_{r=1}^p s_{i_r}\cdot t_{i_r}.
\end{align}
Consequently,
\begin{align}\label{eq:covariance-error}
    \sup_{s,t\in(\mathbb S^2)^n}
\left|
\mathbb E X_{n,p}(s)X_{n,p}(t)
-
\mathbb E Y_{n,p}(s)Y_{n,p}(t)
\right|
\le p(p-1).
\end{align}
In particular,
$\mathbb E X_{n,p}(s)^2=\mathbb E Y_{n,p}(s)^2=n$
for every $s\in(\mathbb{S}^2)^n$. 
\item[(ii)] For $\beta>0$, 
\begin{align}\label{eq:free-energy-comparison}
    \left|
A^X_{n,p}(\beta)-A^Y_{n,p}(\beta)
\right|
\le
\frac{\beta^2p(p-1)}{2n}.
\end{align}
\end{enumerate}
\end{lemma}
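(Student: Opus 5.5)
The plan has three steps: compute the covariance of $X_{n,p}$ directly, compare it with the covariance of $Y_{n,p}$ by a combinatorial count, and then compare the free energies by Gaussian interpolation.

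\textbf{Step 1: covariance of $X_{n,p}$.} Since the coefficients $\alpha[\bar\bi,\bar\ba]$ are i.i.d.\ standard Gaussians, only the diagonal terms survive:
\begin{equation*}
\mathbb E X_{n,p}(s)X_{n,p}(t)=\frac{n}{N_n}\sum_{\bar\bi\in\In}\sum_{\bar\ba\in[3]^p}\prod_{j=1}^p s_{i_j}^{a_j}t_{i_j}^{a_j}.
\end{equation*}
The sum over $\bar\ba\in[3]^p$ factorizes coordinatewise, and each factor equals $\sum_{a=1}^3 s^a_{i_j}t^a_{i_j}=s_{i_j}\cdot t_{i_j}$. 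This gives \eqref{eq:exact-x-covariance}.

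\textbf{Step 1 (continued): comparison with $Y_{n,p}$.} Write $x_i=s_i\cdot t_i\in[-1,1]$, so that $q(s,t)=\frac1n\sum_i x_i$. Then
\begin{equation*}
n\,q^p=n^{1-p}\sum_{\bar\bi\in[n]^p}\prod_r x_{i_r}=n^{1-p}\Big(p!\,e_p(x)+\sum_{\bar\bi\ \text{non-distinct}}\prod_r x_{i_r}\Big),
\end{equation*}
where $e_p$ is the elementary symmetric polynomial. The $X$-covariance is $\frac{n}{N_n}e_p(x)$. The difference therefore splits into two pieces.
\begin{itemize}
\item[(a)] The term $e_p(x)\,n\big(N_n^{-1}-p!\,n^{-p}\big)$. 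Its size is at most $n\big(1-\frac{n!}{(n-p)!\,n^p}\big)$, using $|e_p|\le N_n$ and $N_n p!/n^p\le 1$.
\item[(b)] The non-distinct tuples. There are $n^p-\frac{n!}{(n-p)!}$ of them, each product is bounded by $1$, and the prefactor is $n^{1-p}$.
\end{itemize}
Both pieces are bounded by $n\big(1-\prod_{k<p}(1-k/n)\big)\le n\sum_{k<p}k/n=p(p-1)/2$. Summing the two gives $p(p-1)$. For $s=t$ we have every $x_i=1$, so both covariances equal $n$ exactly.

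\textbf{Step 2: free-energy comparison (part (ii)).} I will use standard Gaussian interpolation. Take $X$ and $Y$ independent and set $Z_u=\sqrt u\,X+\sqrt{1-u}\,Y$. Define
\begin{equation*}
\phi(u)=\frac1n\mathbb E\log\int e^{\beta Z_u}\,d\nu^{\otimes n}.
\end{equation*}
By Gaussian integration by parts,
\begin{equation*}
\phi'(u)=\frac{\beta^2}{2n}\,\mathbb E\big\langle C(s,s)-C(s,t)\big\rangle,
\end{equation*}
where $C=\mathrm{Cov}_X-\mathrm{Cov}_Y$ and $\langle\cdot\rangle$ is the two-replica Gibbs average.

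The diagonal term $C(s,s)$ vanishes by Step 1. Hence
\begin{equation*}
|\phi'(u)|\le\frac{\beta^2}{2n}\sup|C|\le\frac{\beta^2 p(p-1)}{2n},
\end{equation*}
and integrating over $u\in[0,1]$ gives \eqref{eq:free-energy-comparison}. Applying integration by parts on the compact index set $(\mathbb S^2)^n$ needs only a finite-dimensional or continuity justification, which is routine since both processes are continuous.

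\textbf{Main obstacle.} The step needing the most care is the combinatorial bound in Step 1, because the constant must come out exactly as $p(p-1)$. The key is that the diagonal covariances agree exactly, since the $C(s,s)$ cancellation is what removes a possible factor of $2$ in Step 2.
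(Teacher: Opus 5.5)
Your proposal is correct and follows the paper's proof. Your split of $n\,q(s,t)^p$ into distinct and non-distinct index tuples is the algebraic form of the paper's coupling of i.i.d.\ uniform indices $J_1,\ldots,J_p$ with a uniform ordered tuple: each of your pieces (a) and (b) is $n\,\mathbb P(\text{collision})$ times a quantity bounded by $1$. Part (ii) is the same Gaussian interpolation, using $\Delta C(s,s)=0$. The one difference is that you derive \eqref{eq:exact-x-covariance} directly from independence of the coefficients by factorizing the sum over $\bar\ba$. The paper instead imports the product-state overlap formula from \cite{anschuetz2025bounds} and converts $2|\langle\phi^{(i)}|\psi^{(i)}\rangle|^2-1=s_i\cdot t_i$ via the Bloch representation, so your version is slightly more self-contained.
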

\begin{proof}[Proof of Lemma~\ref{lem:x_covar}]
For product states
$|\phi\rangle,|\psi\rangle\in \snp$, we have, by~\cite[Lemma 7]{anschuetz2025bounds},
\begin{align}
    &\mathbb E
\left[ \langle\phi|H_{n,p}|\phi\rangle \langle\psi|H_{n,p}|\psi\rangle \right]
= \frac1{|\mathcal{I}^n|} \sum_{\bar \bi=(i_1,\ldots,i_p)\in \mathcal{I}^n} \prod_{r=1}^p
\left( 2\left| \langle\phi^{(i_r)}|\psi^{(i_r)}\rangle \right|^2-1 \right).
\label{eq:AGK-covariance}
\end{align}
Using~\eqref{eq:bloch}, we have
$$
\Big| \langle\phi^{(i_r)}|\psi^{(i_r)}\rangle \Big|^2= \tr (|\phi^{(i_r)}\rangle\langle\phi^{(i_r)}|
|\psi^{(i_r)}\rangle\langle\psi^{(i_r)}|)= \frac{1+s_{i_r}.t_{i_r}}{2}.
$$
This proves~\eqref{eq:exact-x-covariance}. Next, let $(I_1,\ldots,I_p)$ be uniformly
distributed on $\mathcal{I}^n$. Then
$$
\frac1{|\mathcal{I}^n|} \sum_{\bar \bi=(i_1,\ldots,i_p)\in \mathcal{I}^n} \prod_{r=1}^p s_{i_r}. t_{i_r}
= \mathbb E\prod_{r=1}^p s_{I_r}. t_{I_r}.
$$
If $J_1,\ldots,J_p$ are i.i.d. uniformly distributed
on $[n]$, then
\begin{equation*}
    \frac{1}{n}\mathbb E Y_{n,p}(s)Y_{n,p}(t)= q(s,t)^p=\mathbb E\prod_{r=1}^p s_{J_r}.t_{J_r}.
\end{equation*}
When  $J_1,\ldots,J_p$ are all distinct, couple $(I_1,\ldots, I_p)$ to their increasing rearrangement. Since $|s_{i}. t_{i}|\le1$,
\begin{align}
\left|
\mathbb E\prod_{r=1}^p s_{I_r}. t_{I_r}-q(s,t)^p\right|
&\le 2\,\mathbb P \left(J_r=J_\ell\text{ for some }r<\ell\right)
\le 2\binom p2\frac1n=\frac{p(p-1)}{n} \nonumber.
\end{align}
Multiplying by $n$ proves \eqref{eq:covariance-error}. If $s=t$, then
$s_i.t_i=1$ for every $i$, so both variances equal $n$ for every $s\in(\mathbb{S}^2)^n$. To prove~\eqref{eq:free-energy-comparison}, take independent copies of $X_{n,p}$, $Y_{n,p}$ and for $\lambda\in[0,1]$, set the interpolating process
$H_\lambda(s)
=\sqrt{\lambda}\,X_{n,p}(s) + \sqrt{1-\lambda}\,Y_{n,p}(s)$ and
$$
\varphi(\lambda)
= \frac1n
\mathbb E\log \int_{(\mathbb{S}^2)^n} \exp\{\beta H_\lambda(s)\} \,\nu^{\otimes n}(ds).
$$
Write
$\Delta C(s,t) = \mathbb E X_{n,p}(s)X_{n,p}(t) - \mathbb E Y_{n,p}(s)Y_{n,p}(t)$.
Gaussian integration by parts gives~\cite[Lemma 1.1]{panchenko2013sherrington}
$$
\varphi'(\lambda)=\frac{\beta^2}{2n}\mathbb E\left\langle\Delta C(s^1,s^1)-\Delta C(s^1,s^2)\right\rangle_\lambda,
$$
where $s^1, s^2$ are independent Gibbs samples. Since the variances of $X_{n,p}$ and $Y_{n,p}$ agree,
$\Delta C(s,s)=0$, while \eqref{eq:covariance-error} gives
$|\Delta C(s,t)|\le p(p-1)$. Therefore
$|\varphi'(\lambda)|
\le \beta^2p(p-1)/(2n)$.
Integrating over $\lambda\in[0,1]$ proves
\eqref{eq:free-energy-comparison}.
\end{proof}

Next, we show that $Y_{n,p}$ has the same limiting ground-state value as $X_{n,p}$. For $Z=X_{n,p}$ or $Z=Y_{n,p}$, write
$m^Z_{n,p}:=n^{-1}\mathbb E\max_{s\in(\mathbb{S}^2)^n}Z(s)$.

\begin{lemma}
\label{lem:annealing}
Fix an integer $p\ge2$.  There exists a deterministic function
$\varepsilon_p(\beta)$ such that $\varepsilon_p(\beta)\rightarrow0$ as $\beta\to\infty$, independent of $n$,
such that, for $Z=X_{n,p}$ or $Z=Y_{n,p}$,
\begin{align}\label{eq:beta_to_inf}
    m^Z_{n,p}-\varepsilon_p(\beta)
\le
\frac{1}{\beta}A^Z_{n,p}(\beta)
\le
m^Z_{n,p},
\end{align}
where $A^Z_{n,p}$ is defined by~\eqref{eq:define_ax}. 
Suppose that, for every fixed $\beta>0$, the limit
$F_p(\beta):=\lim_{n\to\infty} \frac{1}{\beta} A^Y_{n,p}(\beta)$ exists.  Then
$$
\lim_{n\to\infty}m^X_{n,p}
=
\lim_{n\to\infty}m^Y_{n,p}
=
\lim_{\beta\to\infty}F_p(\beta)
=
E^\star_{\rm prod}(p).
\label{eq:same-ground-state}
$$
\end{lemma}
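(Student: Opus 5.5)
The plan is the standard ``positive temperature to zero temperature'' comparison: a trivial upper bound, a lower bound by a covering argument on $(\mathbb S^2)^n$, and then the free energy comparison of Lemma~\ref{lem:x_covar}(ii) to pass between $X$ and $Y$.

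\textbf{Upper bound.} Since $\nu^{\otimes n}$ is a probability measure,
\[
\int \exp\{\beta Z(s)\}\,\nu^{\otimes n}(ds)\le \exp\{\beta \max_s Z(s)\}.
\]
Taking logarithms, expectations and dividing by $n\beta$ gives $\beta^{-1}A^Z_{n,p}(\beta)\le m^Z_{n,p}$.

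\textbf{Lower bound.} Let $s^\star$ be a maximizer of $Z$, which exists by continuity and compactness. For $\delta\in(0,1)$ let $U_\delta=\{s:\|s_i-s^\star_i\|\le\delta\ \forall i\}$, so that $\nu^{\otimes n}(U_\delta)\ge (c\delta^2)^n$ for some absolute $c>0$.
\begin{itemize}
\item I need a uniform modulus of continuity: with high probability, and in expectation, $\sup_{s\in U_\delta}(Z(s^\star)-Z(s))\le n\,\omega_p(\delta)$, with $\omega_p(\delta)\to0$ deterministic. For $Y$, the increments satisfy $\mathbb E(Y(s)-Y(t))^2=2n(1-q(s,t)^p)\le 2pn\,\|s-t\|^2/n$, where $\|\cdot\|$ is the Euclidean norm on $(\mathbb R^3)^n$. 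The same bound holds for $X$, up to $O(1)$, by~\eqref{eq:covariance-error}.
\item Rather than a local chaining estimate, the cleaner route is a Lipschitz bound. I will write $Z(s)$ as a multilinear form $\sqrt{n/N_n}\,\langle T, s^{\otimes p}\rangle$ with Gaussian tensor $T$. Then $|Z(s)-Z(t)|\le p\,\sqrt{n/N_n}\,\|T\|_{\mathrm{inj}}\,\|s\|^{p-1}\|s-t\|$, where $\|T\|_{\mathrm{inj}}$ is the injective norm over unit vectors in $(\mathbb R^3)^n$.
\item Standard $\varepsilon$-net bounds give $\mathbb E\|T\|_{\mathrm{inj}}\le C_p\sqrt n$, with Gaussian concentration. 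Since $\|s\|=\sqrt n$, this yields $|Z(s)-Z(t)|\le C_p\sqrt{n}\,n^{(p-1)/2}\sqrt{n/N_n}\,\|s-t\|\le C_p'\sqrt n\,\|s-t\|$.
\item On $U_\delta$ we have $\|s-s^\star\|\le\delta\sqrt n$, so the oscillation is at most $C_p' L n\delta$, where $L$ is a random variable with $\mathbb E L\le C$.
\end{itemize}
Combining these,
\[
\frac{1}{n\beta}\log\int e^{\beta Z}\,d\nu^{\otimes n}\ \ge\ \frac{Z(s^\star)}{n}-C_pL\delta-\frac{\log(1/(c\delta^2))}{\beta}.
\]
Taking expectations and choosing $\delta=\beta^{-1/2}$, say, gives~\eqref{eq:beta_to_inf} with $\varepsilon_p(\beta)=C_p\beta^{-1/2}+C\beta^{-1}\log\beta$, uniformly in $n$. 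The main obstacle is making this injective norm bound uniform in $n$ and consistent with the $\sqrt{n/N_n}$ normalization. The symmetrized tensor has $\binom np 3^p$ independent entries, and the net argument over $(\mathbb S^{3n-1})^{\otimes}$ of dimension $3n$ gives $\sqrt{n}$ growth, so the bound should go through. If it proves delicate, I would fall back on Dudley chaining with the increment bound above.

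\textbf{Limits.} By Lemma~\ref{lem:x_covar}(ii), $\beta^{-1}|A^X_{n,p}(\beta)-A^Y_{n,p}(\beta)|\le \beta p(p-1)/(2n)\to0$, so $\beta^{-1}A^X_{n,p}(\beta)\to F_p(\beta)$ as well. From~\eqref{eq:beta_to_inf},
\[
F_p(\beta)\le\liminf_n m^Z_{n,p}\le\limsup_n m^Z_{n,p}\le F_p(\beta)+\varepsilon_p(\beta)
\]
for both $Z=X_{n,p}$ and $Z=Y_{n,p}$. Moreover, $F_p$ is nondecreasing in $\beta$, because $\beta\mapsto\beta^{-1}\log\int e^{\beta Z}$ is an $L^\beta$-norm-type quantity and hence nondecreasing; thus $\lim_{\beta\to\infty}F_p(\beta)$ exists and is finite, being bounded by $\sup_n m^Y_{n,p}<\infty$. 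Letting $\beta\to\infty$ shows both $\lim_n m^X_{n,p}$ and $\lim_n m^Y_{n,p}$ exist and equal $\lim_\beta F_p(\beta)$.

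Finally, by~\eqref{eq:to_show}, $\mathbb E E^\star_{n,\mathrm{prod}}=m^X_{n,p}$, so this common value is $E^\star_{\rm prod}(p)$.
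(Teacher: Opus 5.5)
Your proposal is correct. Its skeleton matches the paper's: the trivial upper bound, restricting the partition function to a product of caps of measure $(\delta^2/4)^n$ around a maximizer, taking $\delta\asymp\beta^{-1/2}$, and the final sandwich via Lemma~\ref{lem:x_covar}(ii). The genuine difference is how you control the oscillation of $Z$ on the cap.

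The paper proves the increment bound $d_Z(s,t)^2\le p\|s-t\|^2$ for both processes. It then applies Dudley's entropy integral to $\sup_{\|s-t\|\le\delta\sqrt n}|Z(s)-Z(t)|$, obtaining an expected oscillation of order $C_p n\delta\sqrt{\log(C_p/\delta)}$. You instead use the polynomial structure to get a global random Lipschitz constant $p\,n^{(p-1)/2}\sqrt{n/N_n}\,\|T\|_{\mathrm{inj}}$, with $\mathbb E\|T\|_{\mathrm{inj}}\le C_p\sqrt n$ from a net argument. This gives a slightly sharper modulus of order $C_pn\delta$, with no $\sqrt{\log(1/\delta)}$ factor, and is very concrete.

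The cost of your route is that it needs an explicit tensor realization of each process. The paper specifies $Y_{n,p}$ only through its covariance, so you should state that you take $Y_{n,p}(s)=n^{-(p-1)/2}\langle G,s^{\otimes p}\rangle$, with $G$ an i.i.d.\ standard Gaussian tensor on $(\mathbb R^{3n})^{\otimes p}$. This has covariance $nq(s,t)^p$. It is a legitimate choice because $A^Y_{n,p}$ and $m^Y_{n,p}$ depend only on the law. The same telescoping then gives the Lipschitz constant $p\|G\|_{\mathrm{inj}}$, not the $\sqrt{n/N_n}$-normalized one you wrote. The paper's chaining argument uses only second moments, so it applies to $Y_{n,p}$ as an abstract process and to any Gaussian process with such increments.

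One caution about your fallback. The increment bound for $X$ ``up to $O(1)$'', obtained from \eqref{eq:covariance-error}, would not support chaining. An additive constant in $d_X^2$ makes the small-scale covering numbers useless. The bound you would actually need, $d_X(s,t)^2\le p\|s-t\|^2$, follows directly from \eqref{eq:exact-x-covariance} together with $1-\prod_r x_r\le\sum_r(1-x_r)$ for $x_r\in[-1,1]$, as in the paper. Your monotonicity argument for $F_p$, as a log $L^\beta$-norm, is a valid extra but is not needed: the sandwich already forces $\lim_{\beta\to\infty}F_p(\beta)$ to exist.
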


\begin{proof}
The upper bound in \eqref{eq:beta_to_inf} is immediate from the definition of $A^Z_{n,p}$ because
$\nu^{\otimes n}$ is a probability measure.  We prove the lower bound for $X_{n,p}$, $Y_{n,p}$.

Let $d_Z(s,t)^2:=\mathbb E\{Z(s)-Z(t)\}^2$. We claim that for both $X_{n,p}$ and $Y_{n,p}$, we have
\begin{align}\label{eq:dz_euclidean}
    d_Z(s,t)^2 \le p \|s-t\|^2
\end{align}
To see this, note $d_Y(s,t)^2=2n(1-q(s,t)^p) \le 2n p(1-q(s,t))= p \|s-t\|^2$. Further, using \eqref{eq:exact-x-covariance} with $1-\prod_{r=1}^p s_{i_r}t_{i_r} \le \sum_{r=1}^p (1-s_{i_r}t_{i_r})$ yields the inequality for $X_{n,p}$.

An $\eta$-net of $\mathbb{S}^2$ has cardinality at most $(C/\eta)^2$. Hence, for $(\mathbb{S}^2)^n$, to achieve a Euclidean distance $\eta \sqrt{n}$, it is sufficient to approximate each coordinate within $\eta$. Therefore, we obtain the covering number bound
\begin{equation}
  N\bigl((\mathbb{S}^2)^n,\|\cdot\|_{\mathbb R^{3n}},\eta\bigr)
\le
\left(\frac{C n}{\eta^2}\right)^{n}.  
\end{equation}
Using~\eqref{eq:dz_euclidean}, the above display further yields that
\begin{equation}
    \log N\bigl((\mathbb{S}^2)^n,d_Z,\eta\bigr) \le 2n \log \frac{C\sqrt{pn}}{\eta}.
\end{equation}
Define
$\Omega_{n,\delta}
:=
\sup_{\|s-t\|\le\delta\sqrt n}|Z(s)-Z(t)|$. By Dudley's integral inequality applied to this process
\cite[Theorem~8.1.3]{vershynin2019high}, for $0<\delta\le 1/2$,
\begin{align}
\mathbb E
\Omega_{n,\delta}
&\le
C\int_0^{C\sqrt{pn}\delta}
\sqrt{
n\log\left(\frac{C\sqrt{pn}}{\eta}\right)
}\,d\eta \nonumber\\
&\le
C_p n\delta\sqrt{\log(C_p/\delta)}.
\label{eq:local-gaussian-bound}
\end{align}
Let $s^\star$ be a maximizer of $Z$.  Consider the set
$$B_{\delta}(s^\star):=\{t \in (\mathbb S^2)^n: \|t_i-s_i^\star\|\le\delta \textrm{ for every } i \in [n]\}.$$
Recall that if $\nu$
denotes normalized surface measure on $S^2$, then $\nu^{\otimes n} [B_{\delta}(s^\star)]=(\delta^2/4)^n$.
If $t\in B_{\delta}(s^\star)$, $\|t-s^\star\| \le \delta \sqrt{n}$. For every such $t$,
$Z(t)\ge Z(s^\star)-\Omega_{n,\delta}$.  Restricting the partition function to $B_{\delta}(s^\star)$ yields
\begin{align}
\frac{A_{n,p}^Z(\beta)}{\beta} &\ge \frac1{\beta n}
\mathbb E\log
\int_{B_{\delta}(s^\star)}
\exp\{\beta Z(s)\}
\,\nu^{\otimes n}(ds) \nonumber\\
&\ge
\frac1n\mathbb E\max_s Z(s)
-
\frac1n\mathbb E\Omega_{n,\delta}
+
\frac1\beta\log\frac{\delta^2}{4} \nonumber\\
&\ge
m_{n,p}^Z
-
C_p\delta\sqrt{\log(C_p/\delta)}
+
\frac1\beta\log\frac{\delta^2}{4}.
\label{eq:annealing-lower-bound}
\end{align}
Taking $\delta=\min\{\frac12, \beta^{-1/2}\}$, we obtain, uniformly in $n$,
\begin{align}
m_{n,p}^Z-\varepsilon_p(\beta)
\le
\frac{A_{n,p}^Z(\beta)}{\beta}
\le
m_{n,p}^Z,
\label{eq:uniform-annealing}
\end{align}
where
$\varepsilon_p(\beta)
=
C_p\delta \sqrt{\log(C_p/\delta)}
+\beta^{-1}\log(4/ \delta^2)$
and $\varepsilon_p(\beta)\to0$ as $\beta \rightarrow \infty$. This proves~\eqref{eq:beta_to_inf}.

Next, we compare the ground states of
$X_{n,p}$ and $Y_{n,p}$. For every fixed $\beta$, using Lemma~\ref{lem:x_covar},
\begin{align*}
\lim\limits_{n\rightarrow \infty}\left|
A_{n,p}^X(\beta)-A_{n,p}^Y(\beta)
\right|=0
\end{align*}
Hence, if the limit
$F_p(\beta)$ exists, the same limit holds for
$A_{n,p}^X(\beta)/\beta$. Applying~\eqref{eq:beta_to_inf}, for either
$Z=X_{n,p}$ or $Z=Y_{n,p}$ we obtain
\begin{equation}
F_p(\beta)\leq\liminf_{n\to\infty}m^Z_{n,p}\leq \limsup_{n\to\infty}m^Z_{n,p}\leq F_p(\beta)+\varepsilon_p(\beta).
\end{equation}
Since this holds for every $\beta>0$ and $\varepsilon_p(\beta)\to0$ as
$\beta\to\infty$, it follows that $\lim_{n\to\infty}m^Z_{n,p}$ exists and equals $\lim\limits_{\beta\rightarrow\infty} F_p(\beta)$, for both choices of $Z$. This completes the proof.
\end{proof}

Now, we turn our attention to finding the asymptotic limit $\lim\limits_{n\rightarrow \infty }A^Y_{n,p}(\beta)$ for any finite $\beta>0$. The asymptotic free energy for vector spin-glass models have been studied recently~\cite{bovier2009aizenman,panchenko2018free,chen2023self,chen2025free}. We need a few definitions to describe the limit. 
Let $\Pi$ be the set of left-continuous maps $\pi:[0,1]\to\Splus$ that are increasing: $\pi(t) \succeq \pi(t')$ if $t\ge t'$. Let $\mathcal R$ be the continuous Ruelle probability cascade~\cite{bolthausen1998ruelle,ruelle1987mathematical} whose overlap $\alpha\wedge\alpha'$ between two independent samples is uniform on $[0,1]$. For a  path $\pi\in\Pi$, let $w^\pi(\alpha)$ be the centered $\R^3$-valued Gaussian process defined by:
\begin{align*}
\E\bigl[w^\pi(\alpha)w^\pi(\alpha')^\top\bigr]
=\pi(\alpha\wedge\alpha').
\end{align*}
We use~\cite[Corollary 8.3]{chen2025free} for $D=3$. Suppose $\mathcal{H}_n(\sigma)$ is a Gaussian process with $\sigma\in \mathbb{R}^{d \times n}$ with covariance
\begin{equation}\label{eq:define_xi}
    \mathbb{E} \mathcal{H}_n(\sigma) \mathcal{H}_n(\sigma')= n \xi\left(\frac{1}{n}\sigma(\sigma')^\top\right).
\end{equation} Then under certain convexity and monotonicity assumptions of $\xi$, the free energy
\begin{align}
    \mathcal{F}_n= \frac{1}{n}\mathbb{E}\log\int\exp\left(\mathcal{H}_n(\sigma)- \frac{1}{2}n \xi\left(\frac{1}{n}\sigma \sigma^\top\right)\right)d\nu^{\otimes n} 
\end{align}
satisfies $\lim\limits_{n \rightarrow \infty} \mathcal{F}_n= \inf_{\pi \in \Pi} \mathscr P_{p,\beta} (\pi)$, where
\begin{align}
\mathscr P_{p,\beta}(\pi)
:=&\ \E\log\iint
\exp\left\{w^{\nabla\xi\circ\pi}(\alpha)\cdot s
-\frac12\nabla\xi(\pi(1)):ss^\top\right\}
\nu(\dd s)\mathcal R(\dd\alpha)
\nonumber\\
&\quad+\frac12\int_0^1\theta(\pi(u))\dd u,
\qquad
\theta(A)=A:\nabla\xi(A)-\xi(A).
\label{eq:chen-path-functional}
\end{align}
Here, our Gaussian process of interest is $\beta Y_{n,p}$ defined as in~\eqref{eq:y_cov}. Therefore we have the overlap function $\xi(A)= \beta^2 \xi_p(\tr(A))$. Hence,
\begin{align}
\nabla_A\{\xi(A)\}
&=\beta^2\xi_p'(\Tr A)I_3, \qquad 
\theta(A)
=\beta^2\bigl\{(\Tr A)\xi_p'(\Tr A)-\xi_p(\Tr A)\bigr\}.
\label{eq:matrix-theta}
\end{align}
Plugging this into~\eqref{eq:chen-path-functional}, we obtain the simplified functional 
\begin{align}
\mathscr P_{p,\beta}(\pi)
:=&\ \E\log\iint
\exp\left\{w^{\rho_\pi}(\alpha)\cdot s
-\frac{\beta^2}{2}\xi_p'\bigl(\tr\pi(1)\bigr)\|s\|^2\right\}
\nu(\dd s)\mathcal R(\dd\alpha)
\nonumber\\
&\quad+\frac{\beta^2}{2}\int_0^1
\left\{(\tr\pi(u))\xi_p'(\tr\pi(u))-\xi_p(\tr\pi(u))\right\}\dd u,
\label{eq:matrix-Parisi-functional}
\end{align}
where $\rho_\pi(u):=\beta^2\xi_p'(\Tr\pi(u))I_3$. 
Now we state the free energy limit of $Y_{n,p}$.

\begin{lemma}\label{lem:matrix-formula}
For every fixed $p\ge2$ and every $\beta>0$, the limit $A_p(\beta):=\lim_{n\to\infty}A^Y_{n,p}(\beta)$
exists and satisfies
\begin{align}
A_p(\beta)=\frac{\beta^2}{2}+\inf_{\pi\in\Pi}\mathscr P_{p,\beta}(\pi).
\label{eq:matrix-Parisi-formula}
\end{align}
Here, $P_{p,\beta}(\pi)$ is defined as in~\eqref{eq:matrix-Parisi-functional}.
\end{lemma}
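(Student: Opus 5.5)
The plan is to apply \cite[Corollary 8.3]{chen2025free} with $D=3$ to the Gaussian process $\mathcal H_n(\sigma):=\beta Y_{n,p}(s)$, where $\sigma\in\R^{3\times n}$ has columns $s_1,\ldots,s_n\in\Sph$ and the reference measure is $\nu^{\otimes n}$. By~\eqref{eq:y_cov}, the covariance of $\mathcal H_n$ has the form~\eqref{eq:define_xi} with $\xi(A)=\beta^2\xi_p(\Tr A)=\beta^2(\Tr A)^p$, since $\frac1n\sigma(\sigma')^\top=R(s,t)$ and $q=\Tr R$.

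The first step is to remove the self-overlap correction in the definition of $\mathcal F_n$. Every $s\in(\Sph)^n$ satisfies $\Tr Q(s)=\frac1n\sum_i\|s_i\|^2=1$. Hence $\frac n2\xi(\frac1n\sigma\sigma^\top)=\frac{n\beta^2}{2}$ is constant on the support of $\nu^{\otimes n}$, and so $\mathcal F_n=A^Y_{n,p}(\beta)-\beta^2/2$. Consequently, once Chen's corollary gives $\lim_n\mathcal F_n=\inf_{\pi\in\Pi}\mathscr P_{p,\beta}(\pi)$, the limit $A_p(\beta)$ exists and equals $\beta^2/2+\inf_\pi\mathscr P_{p,\beta}(\pi)$. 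The second step is the routine computation~\eqref{eq:matrix-theta}, namely $\nabla\xi(A)=\beta^2 p(\Tr A)^{p-1}I_3$ and $A:\nabla\xi(A)=\beta^2(\Tr A)\xi_p'(\Tr A)$. Substituting these into~\eqref{eq:chen-path-functional} turns $w^{\nabla\xi\circ\pi}$ into $w^{\rho_\pi}$ and turns $\nabla\xi(\pi(1)):ss^\top$ into $\beta^2\xi_p'(\Tr\pi(1))\|s\|^2$, which yields~\eqref{eq:matrix-Parisi-functional}.

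The main obstacle is checking the hypotheses of \cite[Corollary 8.3]{chen2025free}. These are, roughly: $\nu$ has bounded support; $\xi$ is a genuine covariance function, i.e.\ it is realized by a Gaussian process; and $\xi$ satisfies the required convexity or monotonicity on the relevant cone. Boundedness of the support is clear, since $\nu$ lives on $\Sph$.

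For realizability, I would use the explicit representation
\begin{align*}
Y(\sigma)=\beta n^{-(p-1)/2}\sum_{i_1,\ldots,i_p}\sum_{a_1,\ldots,a_p}g_{i_1\ldots i_p}^{a_1\ldots a_p}\prod_r\sigma^{a_r}_{i_r},
\end{align*}
whose covariance is exactly $n\beta^2(\Tr\frac1n\sigma\sigma'^\top)^p$. This shows that $\xi$ arises from a vector spin glass with the usual symmetric tensor structure.

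For convexity, I would argue as follows. On $\Splus$ the function $A\mapsto(\Tr A)^p$ is the composition of the linear functional $\Tr$, which is nonnegative on the cone, with the convex nondecreasing map $x\mapsto x^p$ on $[0,\infty)$. Hence it is convex on $\Splus$. Its gradient $\beta^2p(\Tr A)^{p-1}I_3$ lies in $\Splus$ and is nondecreasing in the Loewner order, because $\Tr$ is monotone with respect to $\succeq$. This gives the monotonicity of $\nabla\xi$ that Chen's formula needs in order to make $\nabla\xi\circ\pi\in\Pi$ for every $\pi\in\Pi$.

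If the corollary is formulated for $\xi$ defined on all of $\R^{3\times3}$ rather than on $\Splus$, I would first note that overlaps only take values in $\{A:|\Tr A|\le1\}$. I would then extend $\xi$ as a convex function of $\Tr A$ on the relevant region for odd $p$, using only that the formula depends on $\xi$ through its values on $\Splus$.

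With these hypotheses verified, the two steps above complete the proof.
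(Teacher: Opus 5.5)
Your proposal is correct and follows the paper's proof essentially step for step. You invoke Chen's Corollary 8.3 with $\xi(A)=\beta^2(\Tr A)^p$, check bounded support and monotonicity/convexity on $\Splus$, and use $\Tr Q(s)=1$ to see that the self-overlap correction is the constant $n\beta^2/2$. Your explicit tensor representation of $Y$ is a small addition that the paper leaves implicit. The closing contingency for odd $p$ is unnecessary, since only convexity on $\Splus$ is used. It would also not be a legitimate move, because changing $\xi$ off $\Splus$ alters the finite-$n$ covariance $n\xi(R(s,t))$ at cross-overlaps with negative trace.
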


\begin{proof}
To invoke \cite[Corollary 8.3]{chen2025free}, we verify the assumptions 
for the covariance map $\xi$ defined by \eqref{eq:define_xi}.

(i) The single-spin prior is $P_1=\nu$, the uniform probability measure on $\Sph$.  Hence $P_1$ is supported on the unit ball of $\R^3$. 

(ii) The map $A\mapsto\xi(A)=\beta^2\xi_p(\Tr A)$ is a polynomial in the entries of $A$, so it has an absolutely convergent power
series and a locally Lipschitz gradient. It vanishes at $A=0$, and $\xi(A)=\xi(A^\dagger)$. $\xi(A) \ge 0$, for $A \in \Splus$. 

(iii)  If $A\succeq B\succeq0$, then $\xi(A)\ge \xi(B)$ and $\nabla \xi(A) \succeq \nabla \xi(B)$. To see this, note that $\Tr A\ge\Tr B\ge0$. Since $q\mapsto q^p$ and $q\mapsto pq^{p-1}$ are nondecreasing on $[0,\infty)$ we have  $\beta^2\xi_p(\Tr A)\ge\beta^2\xi_p(\Tr B)$, and $$\nabla \xi(A)= \beta^2\xi'_p(\Tr A) I_3 \succeq \beta^2\xi'_p(\Tr B) I_3= \nabla \xi(B).$$

(iv) For $A,B \in \Splus$ and $\lambda \in [0,1]$, convexity of $q\mapsto q^p$ on $[0,\infty)$ yields $\xi(\lambda A+ (1-\lambda)B) \le \lambda \xi(A)+ (1-\lambda) \xi(B).$
Hence, $\xi$ is convex on $\Splus$.

\noindent Thus~\cite[Corollary 8.3]{chen2025free} holds for self-overlap corrected Hamiltonian
\begin{align}
\widehat H_{n,p,\beta}(s)
:=\beta Y_{n,p}(s)-\frac n2\beta^2\xi_p(\Tr Q(s)),
\label{eq:corrected-H}
\end{align}
where $Q(s)$ is defined by~\eqref{eq:overlap}. Since every $s_i\in\Sph$, we have
\begin{align*}
\Tr Q(s)=\frac1n\sum_{i=1}^n \|s_i\|^2=1
\end{align*}
implying $$\widehat H_{n,p,\beta}(s)=\beta Y_{n,p}(s)-\frac{n\beta^2}{2}.$$
Hence we have 
that
\begin{align*}
\lim_{n\to\infty}\frac1n\E\log\int e^{\widehat H_{n,p,\beta}(s)}\nu^{\otimes n}(\dd s)
=\inf_{\pi\in\Pi}\mathscr P_{p,\beta}(\pi).
\end{align*}
Adding back self-overlap term $\beta^2/2$ proves the formula \eqref{eq:matrix-Parisi-formula}.
\end{proof}

The variational formula in Lemma~\ref{lem:matrix-formula} gives the required finite-temperature limit, but the optimization is on a path of $3\times3$ matrices. We next show that the variational problem can be simplified for our setup. To this end, define 
\begin{align}
\cM:=\{\alpha:[0,1]\to[0,1]:\alpha\text{ is nondecreasing and right-continuous, }\alpha(1)=1\}.
\label{eq:M}
\end{align}
For $\alpha\in\cM$, let $\Psi_{p,\beta,\alpha}$ denote the solution of
\begin{align}
\partial_q\Psi_{p,\beta,\alpha}
+\frac12\xi_p''(q)
\left\{\Delta\Psi_{p,\beta,\alpha}
+\beta\alpha(q)\norm{\nabla\Psi_{p,\beta,\alpha}}^2\right\}=0,
\label{eq:scalar-PDE}
\end{align}
with boundary condition
\begin{align}
\Psi_{p,\beta,\alpha}(1,x)=g_\beta(\norm{x}),
\qquad
g_\beta(r):=\frac1\beta\log\frac{\sinh(\beta r)}{\beta r},
\qquad g_\beta(0):=0.
\label{eq:g-beta}
\end{align}
Its value is given by the stochastic representation~\eqref{eq:finite-temp-control}.
\begin{align}
\cP_{p,\beta}(\alpha)
:=\Psi_{p,\beta,\alpha}(0,0)
-\frac\beta2\int_0^1q\xi_p''(q)\alpha(q)\dd q.
\label{eq:scalar-functional}
\end{align}

\begin{lemma}\label{lem:scalar-formula}
For every fixed $p\ge2$ and every $\beta>0$,
\begin{align}
\frac{A_p(\beta)}{\beta}
=\inf_{\alpha\in\cM}\cP_{p,\beta}(\alpha).
\label{eq:scalar-Parisi-formula}
\end{align}
Further, the minimizer in RHS is unique, and will be denoted by $\alpha_{p,\beta}$.
\end{lemma}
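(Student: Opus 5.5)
We reduce the matrix-valued infimum of Lemma~\ref{lem:matrix-formula} to a scalar one in three steps: (a) replace each matrix path by an isotropic one, (b) identify the isotropic functional with $\cP_{p,\beta}$, and (c) use strict convexity for uniqueness.

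\textbf{Step (a): reduction to isotropic paths.} The functional $\mathscr P_{p,\beta}(\pi)$ in~\eqref{eq:matrix-Parisi-functional} depends on $\pi$ only through $q(u):=\Tr\pi(u)$. Indeed, the covariance of $w^{\rho_\pi}$ is $\rho_\pi(u)=\beta^2\xi_p'(\Tr\pi(u))I_3$, and the correction term $\theta$ is also a function of $\Tr\pi$ alone. So we may replace $\pi$ by $\bar\pi(u):=\tfrac13\Tr\pi(u)\,I_3$, which is still left-continuous and increasing in $\Splus$, and the value does not change. The infimum over $\Pi$ therefore equals the infimum over nondecreasing left-continuous $q:[0,1]\to[0,3]$. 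Since the spins lie in $\Sph$, the admissible range of the overlap is really $[0,1]$; I expect that Chen's formula effectively restricts to $\Tr\pi(1)\le 1$, and otherwise one checks directly that raising $q$ above $1$ cannot lower the functional. This identification is also why the rotation-invariant prior $\nu$ matters.

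\textbf{Step (b): from the cascade to the scalar PDE.} Fix an isotropic path and rewrite it as the law of the overlap: $\alpha(t):=|\{u: q(u)\le t\}|$ is the distribution function of $q(U)$ with $U$ uniform, so $\alpha\in\cM$. The standard Ruelle probability cascade recursion (Panchenko's Theorem~2.9 for discrete paths, then continuity in the path) turns the cascade expectation into a terminal-value PDE in $x\in\R^3$:
\begin{itemize}
\item the Gaussian field $w^{\rho}(\alpha)\cdot s$ with covariance $\beta^2\xi_p'(q)I_3$ contributes diffusion $\tfrac12\beta^2\xi_p''(q)\Delta$;
\item the cascade weights $\alpha(q)$ contribute the nonlinearity $\tfrac12\beta^2\xi_p''\alpha\,\|\nabla\Phi\|^2$.
\end{itemize}
The terminal value is the single-spin integral $\log\int_{\Sph}e^{x\cdot s}\nu(ds)=\log\frac{\sinh\|x\|}{\|x\|}$, computed from the uniform measure on the sphere. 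Rescaling $x\mapsto\beta x$ and setting $\Psi=\beta^{-1}\Phi$ gives exactly~\eqref{eq:scalar-PDE} with terminal condition $g_\beta$ and the factor $\beta\alpha(q)$. The term $-\frac{\beta^2}{2}\xi_p'(\Tr\pi(1))$, with $\|s\|=1$, combined with the $\theta$-integral and the $+\beta^2/2$ in~\eqref{eq:matrix-Parisi-formula}, should collapse after integration by parts, using $\int_0^1 (q\xi_p'-\xi_p)\,du=\int_0^1 q\xi_p''(q)\alpha(q)\,dq$-type identities, into $-\frac{\beta^2}{2}\int_0^1 q\xi_p''\alpha\,dq$. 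Dividing by $\beta$ then gives~\eqref{eq:scalar-functional}.

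This bookkeeping is the main obstacle. Three things must be checked: that the constant $\beta^2/2$ and the $\xi_p'(1)$ term cancel exactly, that the change of variables from $u$ to $q$ handles atoms and flat pieces of $q(\cdot)$ correctly, and that discrete paths are dense with the functional Lipschitz in the path (the Auffinger--Chen style estimate $|\cP(\alpha)-\cP(\alpha')|\le C\int|\alpha-\alpha'|\xi''$).

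\textbf{Step (c): uniqueness.} Uniqueness follows from strict convexity of $\alpha\mapsto\cP_{p,\beta}(\alpha)$ on $\cM$, adapting Auffinger--Chen's convexity argument to the $\R^3$ radial setting. The linear term in $\alpha$ is harmless. For $\Psi_{p,\beta,\alpha}(0,0)$, use the variational (stochastic control) representation~\eqref{eq:finite-temp-control}: it is a supremum over controls of expressions that are affine in $\alpha$, hence convex. Strictness comes from uniqueness of the optimal control, namely $u=\nabla\Psi$, whose gradient is nondegenerate because $g_\beta$ is strictly convex in $\|x\|$. This strict convexity is needed for the minimizer $\alpha_{p,\beta}$ to be unique.
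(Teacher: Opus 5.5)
\textbf{Verdict: genuine but fillable gap.} The endpoint of the path is never controlled, so you prove only one inequality.

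Your Steps (a) and (b) follow the paper's route: the trace projection $\bar\pi=\frac13\Tr\pi\,I_3$, then the vector Parisi PDE with $\dot\mu=\beta^2\xi_p''I_3$, terminal value $\log\frac{\sinh\|x\|}{\|x\|}$, and the rescaling $x\mapsto\beta x$. The bookkeeping you flag does close when $\Tr\pi(1)=1$. With $\theta_1(q)=(p-1)q^p$ one has $\int_0^1\theta_1(q(u))\,du=(p-1)-\int_0^1 q\xi_p''(q)\alpha(q)\,dq$. Together with $-\frac{\beta^2}{2}\xi_p'(1)=-\frac{p\beta^2}{2}$ and the added $\frac{\beta^2}{2}$, the constants sum to zero.

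The gap is the endpoint. $\Pi$ puts no constraint on $\pi(1)$, so after projecting you minimize over nondecreasing $q(\cdot)$ with values in $[0,\infty)$; the bound $3$ is unjustified. But $\alpha\in\cM$ only encodes paths with $q(1)\le 1$. You have therefore shown only $A_p(\beta)/\beta\le\inf_{\cM}\cP_{p,\beta}$. The reverse inequality requires ruling out that paths with $\Tr\pi(1)>1$ give a smaller value, and your proposal only says you "expect" this. That reverse inequality is exactly what the zero-temperature lower bound later uses, through $A_p(\beta)=\beta\,\cP_{p,\beta}(\alpha_{p,\beta})$.

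The paper closes this step with a separate idea. By Chen's self-overlap result, the infimum may be restricted to paths with $\pi(1)=Q^\star:=\nabla_A\widehat A_{p,\beta}(0)$, where $\widehat A_{p,\beta}(A)$ is the limiting free energy with the extra term $n\,A:Q(s)$.
\begin{itemize}
\item Invariance under $s_i\mapsto Os_i$ forces $Q^\star=aI_3$.
\item $\widehat A_{p,\beta}(tI_3)=\widehat A_{p,\beta}(0)+t$, because $\Tr Q(s)\equiv 1$, forces $a=\frac13$.
\end{itemize}

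Alternatively, you could prove your guess directly. For isotropic paths with range in $[0,q_1]$, $q_1>1$, the functional is convex in the distribution function $\alpha$ on $[0,q_1]$. Its first variation at level $q$ is $\frac{\beta^2}{2}\xi_p''(q)\bigl(\E\|\nabla\Phi_\alpha(q,X_q)\|^2-q\bigr)$, which is $\le 0$ for $q\ge 1$ because $\|\nabla\Phi_\alpha\|\le 1$. So setting $\alpha\equiv 1$ on $[1,q_1]$, i.e. truncating the path at overlap $1$, cannot increase the functional. One of these arguments must be supplied.

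A smaller point in Step (c): for a fixed control, the expression in the control representation is not affine in $\alpha$, since $g_\beta(\|\cdot\|)$ is evaluated at an $\alpha$-dependent point. It is convex in $\alpha$, because $x\mapsto g_\beta(\|x\|)$ is convex and its argument is affine in $\alpha$, and this gives convexity of the supremum. Strictness needs a genuine argument, not just uniqueness of the optimal control. The paper invokes Chen's strict-convexity theorem for the vector Parisi functional, using that $\dot\mu(q)=\beta^2\xi_p''(q)I_3$ is positive definite for a.e.\ $q$.
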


\begin{proof}
We first identify the endpoint of the matrix path in Lemma~\ref{lem:matrix-formula}.  For a symmetric $3 \times 3$ matrix $A$, let
\begin{align}
\widehat A_{p,\beta}(A)
:=\lim_{n\to\infty}\frac1n\E\log\int
\exp\left\{\widehat H_{n,p,\beta}(s)+n\,A:Q(s)\right\}
\nu^{\otimes n}(\dd s),
\label{eq:external-field-pressure}
\end{align}
where $Q(s)$ is defined by~\eqref{eq:overlap}. The limit exists for every symmetric $A$ by the same argument used in Lemma~\ref{lem:matrix-formula}. 
By~\cite[Proposition 2.3]{chen2024self}, we obtain that $A\mapsto\widehat A_{p,\beta}(A)$ is differentiable at $A=0$.  Define
\begin{align}
Q_{p,\beta}^\star:=\nabla_A\widehat A_{p,\beta}(0),
\label{eq:Qstar-def}
\end{align}
and the infimum may be restricted to paths satisfying $\pi(1)=Q_{p,\beta}^\star$.
Now, we evaluate the RHS. The model is invariant in distribution under simultaneous orthogonal rotation. Indeed, for $O\in \mathcal{O}(3)$, simultaneous rotation $s_i\mapsto Os_i$, both the base measure $\nu$ and overlap $q(s,t)$ remain unchanged, implying 
\begin{align*}
\widehat A_{p,\beta}(OAO^\top)=\widehat A_{p,\beta}(A).
\end{align*}
Differentiating at $A=0$ yields
$OQ_{p,\beta}^\star O^\top=Q_{p,\beta}^\star$ for every $O\in \mathcal{O}(3)$, so $Q_{p,\beta}^\star=aI_3$.  
As $\tr(Q(s))=1$, plugging $A=tI_3$ into~\eqref{eq:external-field-pressure} yields
\begin{align*}
\widehat A_{p,\beta}(tI_3)=\widehat A_{p,\beta}(0)+t.
\end{align*}
Differentiating at $t=0$ gives $1=I_3:Q_{p,\beta}^\star=\Tr Q_{p,\beta}^\star=3a$. Thus $Q_{p,\beta}^\star=\frac13I_3$.
Hence, we may restrict ourselves to paths with $\pi(1)=\frac13I_3$. Given $\pi\in \Pi$ with $\pi(1)=\frac13I_3$, define
\begin{align}
\bar\pi(u):=\frac{\Tr\pi(u)}3I_3.
\label{eq:trace-projection}
\end{align}
The path $\bar\pi$ is increasing and has the same endpoint as $\pi$. We claim the following: 
\begin{align}
\mathscr P_{p,\beta}(\bar\pi)=\mathscr P_{p,\beta}(\pi).
\label{eq:trace-equality}
\end{align}
This is immediate from the definition~\eqref{eq:matrix-Parisi-functional} since $\tr\bar\pi(u)=\tr\pi(u)$ for every $u$, which further implies $\rho_{\bar\pi}(u)=\rho_\pi(u)$. Therefore, \eqref{eq:trace-equality} shows we can replace $\pi$ with $\bar \pi$. 

Recall the definition of $\mathcal{M}$ in~\eqref{eq:M} and define, for any $\alpha \in \cM$, $\alpha^{-1}(u):= \inf\{q\in [0,1]: \alpha(q)\ge u\}$. Set $\mathcal{M}_d \subseteq \mathcal{M}$ denote all discrete cumulative distribution function on $[0,1]$ with a positive atom at $1$. Define $\Psi_{\star}(q)= \frac{q}3 I_3$. For any discrete path $\pi$ with $\pi(u)=c(u)I_3$, the function $u \mapsto \tr(\pi(u))$ is nondecreasing (as $\pi$ is nondecreasing in $\Splus$), and satisfies $\tr(\pi(1))=1$. Hence, the map equals $\alpha^{-1}$ for some $\alpha \in \mathcal{M}_d$, with $\pi= \Psi_{\star} \circ \alpha^{-1}$. 
Therefore, by \cite[Lemma 4.1]{chen2025parisi},
\begin{align}
F(\Psi_\star,\alpha)
=\mathscr P_{p,\beta}(\Psi_\star\circ\alpha^{-1}),
\label{eq:fixed-path-identity}
\end{align}
for $\alpha \in \mathcal{M}_d$ and $F$ is defined as in~\cite[(1.16)]{chen2025parisi}.  By a standard approximation argument from $\cM_d$ to $\cM$, we have 
\begin{align}
\inf_{\alpha\in\cM}F(\Psi_\star,\alpha)
=\inf_{\alpha\in\cM_{\rm d}}F(\Psi_\star,\alpha)= \inf_{\alpha \in \mathcal{M}_d}\mathscr P_{p,\beta}(\Psi_\star\circ \alpha^{-1})= \inf_{\alpha \in \mathcal{M}}\mathscr P_{p,\beta}(\Psi_\star\circ \alpha^{-1}),
\label{eq:discrete-alpha-inf}
\end{align}


Therefore, it is enough to compute $F(\Psi_\star,\alpha)$ for our Hamiltonian. To this end, define
$\mu(q)= \beta^2\xi_p'(q)I_3.$ Let
\(\Phi_\alpha\) denote the solution of the Parisi PDE~\cite[(1.12)]{chen2025parisi}:
\begin{align}
\partial_q \Phi_\alpha(q,x)
+\frac12 \dot\mu(q):
\left(
\nabla^2\Phi_\alpha(q,x)
+\alpha(q)\nabla\Phi_\alpha(q,x)
\nabla\Phi_\alpha(q,x)^\top
\right)=0.
\label{eq:chen-pde-specialization}
\end{align}
Since $\dot\mu(q)=\beta^2\xi_p''(q)I_3$, we obtain $I_3:
\nabla^2\Phi_\alpha=\Delta\Phi_\alpha$ and $I_3:
\bigl(\nabla\Phi_\alpha\nabla\Phi_\alpha^\top\bigr)
=\|\nabla\Phi_\alpha\|^2$. Therefore,~\eqref{eq:chen-pde-specialization} simplifies to
\begin{align}
\partial_q\Phi_\alpha
+\frac{\beta^2}{2}\xi_p''(q)
\left(
\Delta\Phi_\alpha
+\alpha(q)\|\nabla\Phi_\alpha\|^2
\right)=0.
\label{eq:Phi-pde}
\end{align}
The terminal condition is given by~\cite[(1.13)]{chen2025parisi}:
\begin{align}
    \Phi_\alpha(1,x) &= \log\int_{\mathbb{S}^2} \exp(x.s -\frac{1}{2}\mu(1):ss^\top) \nu(ds)\nonumber \\
    &=\log\int_{\mathbb{S}^2} \exp(x.s -\frac{1}{2}p \beta^2) \nu(ds)\nonumber \\
    &= -\frac{1}{2}p \beta^2+ \log\frac{\sinh\|x\|}{\|x\|},\label{eq:Phi-boundary}
\end{align}
defined continuously at $x=0$. Here, the second equality follows from $\int_{\mathbb S^2} e^{x.s}\nu(ds)= \frac{1}{2}\int_{-1}^1 e^{t\|x\|}dt$ by rotating $x$. Recall the definition of $\theta(A)$ from~\eqref{eq:matrix-theta}. At \(A=\Psi_\star(1)=I_3/3\),
\begin{align}
\theta(\Psi_\star(1))
&=
\frac13 I_3:
\bigl(\beta^2\xi_p'(1)I_3\bigr)
-\beta^2\xi_p(1) =
(p-1)\beta^2.
\label{eq:theta-endpoint}
\end{align}
Moreover,
\begin{align}
\Psi_\star(q):\dot\mu(q)
&=
\frac q3 I_3:
\bigl(\beta^2\xi_p''(q)I_3\bigr) =
\beta^2 q\xi_p''(q).
\label{eq:path-mu-term}
\end{align}
Combining the above two displays, we obtain that the definition of $F$ from~\cite[(1.16)]{chen2025parisi} simplifies to:
\begin{align}
F(\Psi_\star,\alpha)
=
\Phi_\alpha(0,0)
+\frac{(p-1)\beta^2}{2}
-\frac{\beta^2}{2}
\int_0^1
q\xi_p''(q)\alpha(q)\,dq.
\label{eq:F-fixed-path}
\end{align}
Defining
$$
\overline\Phi_\alpha(q,x):= \Phi_\alpha(q,x)+\frac{p\beta^2}{2},$$
the function
\(\overline\Phi_\alpha\) satisfies the same vector Parisi-PDE
\eqref{eq:Phi-pde}, with boundary condition 
$\overline\Phi_\alpha(1,x)
= \log\frac{\sinh\|x\|}{\|x\|}.$
Plugging $\Phi_\alpha(0,0) = \overline\Phi_\alpha(0,0) -\frac{p\beta^2}{2}$
into \eqref{eq:F-fixed-path}, we obtain
\begin{align}
A_p(\beta)
=
\inf_{\alpha\in\mathcal M}
\left\{
\overline\Phi_\alpha(0,0)
-\frac{\beta^2}{2}
\int_0^1
q\xi_p''(q)\alpha(q)\,dq
\right\}.
\label{eq:Ap-before-rescaling}
\end{align}
Setting $\Psi_{p,\beta,\alpha}(q,x)
:=
\frac1\beta
\overline\Phi_\alpha(q,\beta x)$ proves~\eqref{eq:scalar-Parisi-formula}.

Finally, since $\dot\mu(q)=\beta^2\xi_p''(q)I_3$ is positive definite for Lebesgue-a.e. \(q\in[0,1]\), by~\cite[Theorem 1.2]{chen2025parisi}, we obtain strict convexity of
\(\alpha\mapsto F(\Psi_\star,\alpha)\), and hence uniqueness of the
minimizer in \eqref{eq:scalar-Parisi-formula}.
\end{proof}

Now, we will analyze the case $\beta \rightarrow \infty$. For the remainder of the proof, we denote its unique minimizer of the RHS of \eqref{eq:scalar-Parisi-formula} by $\alpha_{p,\beta}$. Our argument is a generalization of \cite{auffinger2015parisi}. Formally, we will prove
$$
\lim_{\beta\to\infty}\frac{A_p(\beta)}{\beta}
= \inf_{\gamma\in\mathcal U}\mathcal P_p(\gamma),
$$
where $\mathcal P_p$ is defined by~\eqref{eq:define_variational}.

\noindent \textbf{Upper bound:} We first prove the upper bound. The stochastic representation corresponding to the PDE~\eqref{eq:scalar-PDE} is the \(\mathbb R^3\)-valued version of \cite[Theorem 2 and Corollary 1]{auffinger2015parisi}. Recall that
$\dot\mu(q)=\beta^2\xi_p''(q)I_3$.  Applying
\cite[Proposition~3.2]{chen2025parisi} and using the rescaling
$\Psi_{p,\beta,\alpha}(q,x)=\beta^{-1}\overline\Phi_\alpha(q,\beta x)$ from the proof of Lemma~\ref{lem:scalar-formula}, we have
\begin{align}
\Psi_{p,\beta,\alpha}(s,x)
=
\sup_{u\in D[s,1]}
\Bigg\{
&\mathbb E\,g_\beta\left(
\left\|
x+\int_s^1\sqrt{\xi_p''(q)}\,dB_q
+\beta\int_s^1\xi_p''(q)\alpha(q)u(q)\,dq
\right\|
\right) \nonumber\\
&\qquad
-\frac{\beta}{2}\int_s^1
\xi_p''(q)\alpha(q)\mathbb E\|u(q)\|^2\,dq
\Bigg\},
\label{eq:finite-temp-control}
\end{align}
where recall that $D[s,t]$ the collection of all progressive measurable processes $u:[s,t]\mapsto \mathbb{R}^3$ satisfying $\sup_{s\le r \le t} \|u(r)\| \le 1$. Adding the constraint $\sup_{0\le t \le 1} \|u(t)\| \le 1$ can be justified as follows: the terminal condition $x \mapsto g_{\beta}(\|x\|)$ is 1-Lipschitz, implying $\|\nabla \Psi_{p,\beta,\alpha}\| \le 1$. By the stochastic control representation of Parisi PDE~\cite[Proposition 3.2]{chen2025parisi}, and the maximizer $u$ equals $\nabla \Psi_{p,\beta,\alpha}(t,x)$ for some $(t,x)$.

First fix a bounded $\gamma\in\mathcal U$.  For
$\beta\ge\|\gamma\|_\infty$, define
$\alpha_{\beta}(q)=\gamma(q)/\beta$ for $q<1$ and set $\alpha_{\beta}(1)=1$.  Then
$\alpha_{\beta} \in\mathcal M$. Plugging $\beta \alpha_{\beta}=\gamma$ into~\eqref{eq:finite-temp-control}, we obtain 
\begin{align}
\Psi_{p,\beta,\alpha_\beta}(0,0)
=
\sup_{u\in D[0,1]}
\Bigg\{
&\mathbb E\,g_\beta\left(
\left\|\int_0^1\sqrt{\xi_p''(q)}\,dB_q
+\int_0^1\xi_p''(q)\gamma(q)u(q)\,dq
\right\|
\right) \nonumber\\
&\qquad
-\frac{1}{2}\int_0^1
\xi_p''(q)\gamma(q)\mathbb E\|u(q)\|^2\,dq
\Bigg\},
\label{eq:finite-temp-control-at-zero}
\end{align}
We note that, compared to~\eqref{eq:define_p}, the above display has $g_\beta(\|.\|)$ instead of $\|.\|$. Fix $u \in D[0,1]$ and define $X^u
= \int_0^1\sqrt{\xi_p''(q)}\,dB_q +\int_0^1\xi_p''(q)\gamma(q)u(q)\,dq$. Since $\|u(q)\|\le 1$, we have 
\begin{align*}
    \mathbb{E}\|X^u\|\le \mathbb E\|\int_0^1\sqrt{\xi_p''(q)}\,dB_q \|+ \int_0^1\xi_p''(q)|\gamma(q)| \,dq \le \sqrt{3p}+ p(p-1)\|\gamma\|_1:= \kappa
\end{align*}
Hence, $\sup_{u}\mathbb E\|X^u\| \le \kappa$. Next, note that for $t \ge 0$, 
$$0 \le t-g_{\beta}(t) \le \frac{1}{\beta} \log (2(1+\beta t)),$$
which implies using Jensen's inequality that
$$0 \le \E \|X^u\|- \E g_{\beta}(\|X^u\|) \le \frac{1}{\beta} \log (2(1+\beta \kappa)).$$
Taking $\beta \rightarrow \infty$, we obtain $\Psi_{p,\beta,\alpha_\beta}(0,0) \rightarrow \Psi_{p,\gamma}(0,0)$.
Hence, using Lemma~\ref{lem:scalar-formula}, we obtain
\begin{align*}
    \limsup_{\beta\rightarrow \infty} \frac{A_p(\beta)}{\beta} \le \Psi_{p,\gamma}(0,0)- \frac{1}{2} \int_{0}^1 t \xi''_p(t) \gamma(t)dt= \mathcal{P}_p(\gamma).
\end{align*}

Finally, if $\gamma$ is unbounded, define $\gamma_M=\min(\gamma,M)$, where $\gamma_M\in \mathcal{U}$, bounded, and $\|\gamma_M-\gamma\|_1\rightarrow 0$. It is easy to see 
$$|\mathcal{P}_p(\gamma)- \mathcal{P}_p(\gamma_M)| \le C_p \|\gamma_M-\gamma\|_1\rightarrow 0,$$ for some $C_p>0$ completing the proof of the upper bound.

\noindent \textbf{Lower bound:}
For the lower bound, set $\gamma_{p,\beta}(q):=\beta\alpha_{p,\beta}(q)$, $\xi_\beta(q):=\beta^2\xi_p(q)=\beta^2q^p$ and $\theta_\beta(q):=q\xi_\beta'(q)-\xi_\beta(q)=(p-1)\beta^2q^p$. Define $h(x):=\log\int_{\mathbb{S}^2}e^{x\cdot s}\nu(ds)$.  We begin by proving differentiability of $Q_{p,\beta}$, defined below.

A distribution function with at most $k$ atoms can be written as
$$
\alpha(q)=\sum_{\ell=0}^k m_\ell\mathbf 1_{[q_\ell,q_{\ell+1})}(q),
$$
where $0=m_0\le m_1\le\cdots\le m_k=1$ and $0=q_0\le q_1\le\cdots\le q_k\le q_{k+1}=1$. Let $z_0,\ldots,z_k$ be independent centered Gaussian vectors in $\R^3$ with
$$
\E z_\ell z_\ell^\top=\{\xi_\beta'(q_{\ell+1})-\xi_\beta'(q_\ell)\}I_3.
$$
and set $X_k=h(\sum_{\ell=0}^kz_\ell)$.  Recursively, for $\ell=k,\ldots,1$,
$$
X_{\ell-1}=
\begin{cases}
 m_\ell^{-1}\log\E_\ell e^{m_\ell X_\ell},&m_\ell>0,\\
 \E_\ell X_\ell,&m_\ell=0,
\end{cases}
$$
where $\E_\ell$ denotes the expectation in $z_j$, $\ell \le j \le k$.  Define
\begin{align}
Q_{p,\beta}(\alpha)
:=\E X_0-\frac12\sum_{\ell=1}^k m_\ell\{\theta_\beta(q_{\ell+1})-\theta_\beta(q_\ell)\}.
\label{eq:finite-step-Q}
\end{align}
Similar notation is common in spin-glass literature and is used to prove differentiability of Parisi functional~\cite{panchenko2008differentiability}. For discrete $\alpha$, we have $Q_{p,\beta}(\alpha)=\beta\mathcal{P}_{p,\beta}(\alpha)$. If $Z\sim N(0,\tau I_3)$, then
\begin{align}
\E_Ze^{h(x+Z)}
&=\int_{S^2}e^{x\cdot s}\E e^{Z\cdot s}\nu(ds)
=e^{\tau/2}e^{h(x)},
\label{eq:fixed-length}
\end{align}
because $\|s\|=1$. Therefore, the precise recursion of~\cite[(1.9)]{panchenko2008differentiability} holds.
This implies, by~\cite[Lemma 1]{panchenko2008differentiability} the following: If $\alpha$ minimizes $Q_{p,\beta}$ over distribution functions with at most $k$ atoms, then
$$
\partial_\beta Q_{p,\beta}(\alpha)
=\beta\left(1-\int_0^1q^p\,d\alpha(q)\right).
$$

Next, we have to consider the minimizer of $Q_{p,\beta}$ for all probability distribution $\alpha$. The argument is exactly same as~\cite[Theorem 1]{panchenko2008differentiability} once we can show 
\begin{equation}\label{eq:second_derivative}
    |Q_{p,\beta+y}(\alpha)- Q_{p,\beta}(\alpha) - y\partial_\beta Q_{p,\beta}(\alpha)| \le C y^2,
\end{equation} where $\beta,\beta+y \in [0,B]$ and $C=C(p,B)$ does not depend on $\alpha$.

\noindent \textbf{Proof of~\eqref{eq:second_derivative}:}
For discrete $\alpha$, define
\begin{align*}
    r_{\beta,\alpha}(u):= \frac12\xi_\beta'\bigl(\alpha^{-1}(u)\bigr)I_3 = \beta^2 r_{1,\alpha}(u),
\qquad
r_{1,\alpha}(u) = \frac12\xi_p'\bigl(\alpha^{-1}(u)\bigr)I_3.
\end{align*}
By the standard cascade computation
\cite[(3.7)]{chen2025free}, 
the recursion above gives
\begin{equation}
    \mathbb E X_0= -\psi(r_{\beta,\alpha})+ \frac{1}{2}\xi_\beta'(1)= -\psi(r_{\beta,\alpha})+  \frac{1}{2}p\beta^2,
\end{equation}
where $\psi(r)$ is defined as \cite[(5.19)]{chen2025free}. Hence,
\begin{equation}\label{eq:Q-cascade}
Q_{p,\beta}(\alpha)
=
-\psi(r_{\beta,\alpha})
+
\frac{\beta^2}{2}
\left\{
p-\int_0^1 q\xi_p''(q)\alpha(q)\,dq
\right\}.
\end{equation}
 Since $0\le \alpha^{-1}\le1$, we have uniformly in discrete $\alpha$,
$$
\|r_{1,\alpha}\|_{L^1} \le \frac{\sqrt3\,p}{2},
\qquad
\|r_{1,\alpha}\|_{L^2}^2 \le \frac{3p^2}{4}.
$$
Using \cite[(5.15),(5.16)]{chen2025free}, with $N=1$ and $t=0$, we obtain
$$
\left| \psi(r')-\psi(r) -\langle \partial_q\psi(r),r'-r\rangle_{L^2}\right| \le 8\|r'-r\|_{L^2}^2, \qquad
\left| \langle \partial_q\psi(r),v\rangle_{L^2}\right|
\le \|v\|_{L^1}.$$
Taking $r=r_{\beta,\alpha}$ and $r'=r_{\beta+y,\alpha}$, we have $r_{\beta+y,\alpha}-r_{\beta,\alpha}
=
(2\beta y+y^2)r_{1,\alpha}$.
Hence, for $0<\beta,\beta+y\le B$,
\begin{align}\label{eq:Q-quadratic}
&\left| Q_{p,\beta+y}(\alpha)-Q_{p,\beta}(\alpha)
-y\partial_\beta Q_{p,\beta}(\alpha) \right| \\
&\le \left|
\psi(r')-\psi(r)
-\langle \partial_q\psi(r),r'-r\rangle_{L^2}
\right|+ y^2\left|
\langle \partial_q\psi(r),r_{1,\alpha}\rangle_{L^2}
\right|+ \frac{y^2}{2}\Big|p-\int_0^1 q\xi_p''(q)\alpha(q)\,dq\Big|\\
&\le 8(2\beta y+y^2)^2 \|r_{1,\alpha}\|^2_{L^2}+y^2\|r_{1,\alpha}\|_{L^1}+\frac{p}{2}y^2 \le C_{B,p}y^2
\end{align}
uniformly over discrete $\alpha$ with $C_{p,B}=\frac{(1+\sqrt3)p}{2}+24B^2p^2$.
Therefore, using~\cite[Theorem 1]{panchenko2008differentiability},
\begin{align}
A_p'(\beta)
=\beta\left(1-\int_0^1q^p\,d\alpha_{p,\beta}(q)\right)
=p\int_0^1q^{p-1}\gamma_{p,\beta}(q)dq.
\label{eq:derivative}
\end{align}

Our upper bound of $A_p(\beta)/\beta$ together with convexity bounds the RHS of the above display. Indeed, choose $\beta_0,C$ such that $A_p(\beta)/\beta\le C$ for all $\beta\ge\beta_0$. Since $A_p \ge 0$ and convex, we have, for all $\beta\ge \beta_0$,
$$A_p'(\beta)\le \frac{A_p(2\beta)-A_p(\beta)}{\beta} \le \frac{A_p(2\beta)}{\beta} \le 2C.$$
Hence, \eqref{eq:derivative} implies that for $\beta \ge \beta_0$,
\begin{align}
p\int_0^1q^{p-1}\gamma_{p,\beta}(q)dq\le C.
\label{eq:weighted-bound}
\end{align}
Since $\gamma_{p,\beta}$ is nondecreasing, 
\begin{align*}
    \int_0^1\gamma_{p,\beta}(q)dq &=  \int_0^{1/2}\gamma_{p,\beta}(q)dq+  \int_{1/2}^1\gamma_{p,\beta}(q)dq\\
    &\le \frac12 \gamma_{p,\beta}\Big(\frac12\Big) + \frac{2^{p-1}}{p} \int_{1/2}^1 pq^{p-1} \gamma_{p,\beta}(q)dq \le \frac{C}{(1-2^{-p})} + \frac{2^{p-1}C}{p}=:C_p
\end{align*}
Hence, 
\begin{align}
\sup_{\beta\ge\beta_0}\int_0^1\gamma_{p,\beta}(q)dq<\infty.
\label{eq:L1-bound}
\end{align}
The same argument also yields $\gamma_{p,\beta}(t) \le C_p/(1-t)$. Thus, along any sequence $\beta \rightarrow \infty$, Helly's theorem gives a subsequence and some $\gamma \in \mathcal{U}$ such that $\gamma_{p,\beta}(q) \rightarrow \gamma(q)$ for every continuity point $q<1$. Since $\xi_p'' \le p(p-1)$, we obtain by~\eqref{eq:L1-bound}, $\sup_{\beta\ge\beta_0}\int_0^1\xi_p''(q)\gamma_{p,\beta}(q)dq<\infty.$ Hence along a further subsequence,  $\xi_p''(q)\gamma_{p,\beta}(q)dq$ converge weakly,
\begin{align}
\xi_p''(q)\gamma_{p,\beta}(q) dq \Longrightarrow \xi_p''(q)\gamma(q)dq+ \upsilon \delta_1
\label{eq:measure-limit}
\end{align}
for some $\upsilon \ge0$. So,
\begin{align}
\frac12\int_0^1q\,\xi_p''(q)\gamma_{p,\beta}(q) dq
\longrightarrow
\frac12\int_0^1q\xi_p''(q)\gamma(q)dq+\frac{\upsilon}2.
\label{eq:penalty-limit}
\end{align}
Next, we prove a lower bound.

\begin{lemma}
\label{prop:endpoint-gain} With $\upsilon$ defined as~\eqref{eq:measure-limit},
$$
\liminf_{\beta\to\infty}\Psi_{p,\beta,\alpha_{p,\beta}}(0,0)
\ge \Psi_{p,\gamma}(0,0)+\frac \upsilon 2.
$$
Here $\Psi_{p,\beta,\alpha}$ and $\Psi_{p,\gamma}$ are defined by~\eqref{eq:scalar-PDE} and~\eqref{eq:define_p} respectively.
\end{lemma}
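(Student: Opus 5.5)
We follow the strategy of \cite[Lemma 3]{auffinger2015parisi} and bound $\Psi_{p,\beta,\alpha_{p,\beta}}(0,0)$ from below through the stochastic control representation~\eqref{eq:finite-temp-control}. Every admissible control $u$ yields a lower bound, so we fix a test control and compare term by term. Write $\gamma_\beta:=\gamma_{p,\beta}=\beta\alpha_{p,\beta}$; then the drift in~\eqref{eq:finite-temp-control} is $\int_0^1\xi_p''\gamma_\beta u\,dq$ and the penalty is $\frac12\int_0^1\xi_p''\gamma_\beta\E\|u\|^2dq$. Fix $\varepsilon>0$ and a control $u\in D[0,1]$ that is $\varepsilon$-optimal for $\Psi_{p,\gamma}(0,0)$. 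By density we may take $u$ continuous in time on $[0,1-\eta]$ for some small $\eta$, adapted, with $\|u\|\le1$.

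The test control is built in two pieces. On $[0,1-\eta]$ we use $u$. On $(1-\eta,1]$ we use the direction control $v(q)=X_{1-\eta}/\|X_{1-\eta}\|$, which is constant in $q$ and measurable at time $1-\eta$. Here $X_{1-\eta}=\int_0^{1-\eta}\sqrt{\xi_p''}\,dB+\int_0^{1-\eta}\xi_p''\gamma_\beta u\,dq$, and we set $v$ to a fixed unit vector if $X_{1-\eta}=0$. The terminal point is then
\[
X_{1-\eta}+\int_{1-\eta}^1\sqrt{\xi_p''}\,dB+m_\beta(\eta)\,v, \qquad m_\beta(\eta):=\int_{1-\eta}^1\xi_p''\gamma_\beta\,dq .
\]
Since $\|v\|=1$, the endpoint penalty is exactly $\frac12 m_\beta(\eta)$. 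Using $g_\beta(r)\ge r-\frac1\beta\log(2(1+\beta r))$, as in the upper bound, and the triangle inequality, we get
\[
\|X_{1-\eta}+W+m v\|\ge \|X_{1-\eta}\|+m-\|W\|,
\]
where $W=\int_{1-\eta}^1\sqrt{\xi_p''}\,dB$ satisfies $\E\|W\|\le C\sqrt\eta$. Hence the value of this control is at least
\[
\E\|X^\beta_{1-\eta}\|-\frac12\int_0^{1-\eta}\xi_p''\gamma_\beta\E\|u\|^2dq+\frac12 m_\beta(\eta)-C\sqrt\eta-o_\beta(1).
\]
The point is that pushing in the radial direction gains $m$ in norm but pays only $m/2$. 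This is exactly the source of the extra $\upsilon/2$.

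Next we let $\beta\to\infty$ along the subsequence from~\eqref{eq:measure-limit}, choosing $1-\eta$ to be a continuity point of $\gamma$. The weak convergence in~\eqref{eq:measure-limit} together with the pointwise convergence of $\gamma_\beta$ gives $m_\beta(\eta)\to\int_{1-\eta}^1\xi_p''\gamma\,dq+\upsilon$. Because $u$ is continuous and bounded on $[0,1-\eta]$, the integrals $\int_0^{1-\eta}\xi_p''\gamma_\beta u\,dq$ and $\int_0^{1-\eta}\xi_p''\gamma_\beta\|u\|^2dq$ converge to their $\gamma$-counterparts pathwise. Bounded convergence, using $\gamma_\beta\le C_p/(1-t)$, which is bounded on $[0,1-\eta]$, then passes the limit through the expectation. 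The liminf is therefore at least
\[
\E\|X^\gamma_{1-\eta}\|-\frac12\int_0^{1-\eta}\xi_p''\gamma\E\|u\|^2+\frac12\int_{1-\eta}^1\xi_p''\gamma+\frac\upsilon2-C\sqrt\eta .
\]
The term $\frac12\int_{1-\eta}^1\xi_p''\gamma\ge0$ can be discarded. By monotone integrability, the value of $u$ for $\Psi_{p,\gamma}$ differs from $\E\|X^\gamma_{1-\eta}\|-\frac12\int_0^{1-\eta}\cdots$ by at most $C\sqrt\eta+2\int_{1-\eta}^1\xi_p''\gamma$, which vanishes as $\eta\to0$. Letting $\eta\to0$ along continuity points and then $\varepsilon\to0$ gives the claim. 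Since every sequence $\beta\to\infty$ has such a subsequence along which $\gamma_\beta\to\gamma$ and~\eqref{eq:measure-limit} holds with this $\upsilon$, the liminf statement follows.

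The main obstacle is the limit in the middle step: the weak convergence~\eqref{eq:measure-limit} must interact correctly with a random, time-dependent control. Restricting to continuous controls on $[0,1-\eta]$ and keeping all mass near $1$ in the constant-direction piece is designed to avoid this difficulty. One must also check that an $\varepsilon$-optimal control for $\Psi_{p,\gamma}$ can be taken continuous, for example by time-mollifying an adapted control with a backward kernel. This preserves adaptedness and the constraint $\|u\|\le1$, and the resulting change in value is controlled by $L^1(\xi_p''\gamma\,dq)$ convergence.
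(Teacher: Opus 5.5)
Your proof is correct and uses the same mechanism as the paper. You follow a near-optimal control $u$ for $\Psi_{p,\gamma}$ until a time close to $1$, then push radially, so the mass $\upsilon$ escaping to the endpoint is gained in full in the norm but charged only $\upsilon/2$ in the penalty. The implementation differs.

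\textbf{The paper's route.} It uses a $\beta$-independent control which, on $[\varepsilon,1]$, follows the smoothed direction $g_n$ of the \emph{limiting} state driven by $u$ and $\gamma$. Since this control is continuous in time there, it can be integrated directly against the weak limit in \eqref{eq:measure-limit}. This yields a limiting problem with the measure $\nu_0$ carrying the atom $\upsilon\delta_1$. The gain is then read off from $\|S+\upsilon g(S)\|=\|S\|+\upsilon$ after sending $\beta\to\infty$, $\varepsilon\uparrow1$, $n\to\infty$.

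\textbf{Your route.} You freeze the direction at time $1-\eta$ using the actual $\beta$-dependent state. The drift on $(1-\eta,1]$ is then exactly $m_\beta(\eta)v$, so the radial gain $\|X_{1-\eta}+m v\|=\|X_{1-\eta}\|+m$ is exact already at finite $\beta$. The Brownian increment after the freezing time costs $C\sqrt\eta$ via the triangle inequality. The only input you need from \eqref{eq:measure-limit} is convergence of the scalar mass $m_\beta(\eta)$, which follows from the portmanteau theorem since the limit measure does not charge $\{1-\eta\}$. This avoids $g_n$ and $\nu_0$ at the cost of an $O(\sqrt\eta)$ error, and is the more elementary of the two arguments.

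\textbf{Two small corrections.} First, the step you call the main obstacle is not one. On $[0,1-\eta]$ one has $\gamma_{p,\beta}\le C_p/\eta$ and $\gamma_{p,\beta}\to\gamma$ almost everywhere, so $\int_0^{1-\eta}\xi_p''|\gamma_{p,\beta}-\gamma|\,dq\to0$ by dominated convergence. Hence the drift converges uniformly in $\omega$, and the penalty converges, for every $u\in D[0,1]$. Continuity of $u$, the mollification, and the choice of $1-\eta$ as a continuity point of $\gamma$ can all be dropped; the paper likewise uses an arbitrary $u\in D[0,1]$ on $[0,\varepsilon)$.

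Second, your last sentence overstates the conclusion. Different subsequences may produce different pairs $(\gamma,\upsilon)$, so what you prove is the bound for the liminf along the subsequence that defines $(\gamma,\upsilon)$. That is all the paper needs, and it is how its statement must be read. The final bound $\inf_{\tilde\gamma\in\mathcal U}\mathcal P_p(\tilde\gamma)$ does not depend on the subsequence, and one can first pick a sequence realizing $\liminf_{\beta}A_p(\beta)/\beta$.
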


\begin{proof}
Define a finite measure $\nu_0$ on
$[0,1]$ by
$$
\nu_0(ds)
=
\gamma(s)\mathbf 1_{[0,1)}(s)\,ds
+
\frac{\upsilon}{\xi_p''(1)}\delta_1(ds).
$$
Then
$\xi_p''(s)\nu_0(ds)
=
\xi_p''(s)\gamma(s)\,ds+\upsilon\delta_1(ds)$. Fix a unit vector $e\in\mathbb R^3$.  For $n\ge1$, define
$$
g_n(x)
=
\frac{x+n^{-1}e}
{\max\{n^{-1},\|x+n^{-1}e\|\}},
\qquad x\in\mathbb R^3.
$$
Then $\|g_n(x)\|\le1$ and
$$
g_n(x)\longrightarrow
g(x):=
\begin{cases}
x/\|x\|, & x\neq0,\\
e, & x=0.
\end{cases}
$$
Fix $u\in D[0,1]$.  For $\varepsilon\in(0,1)$, define
\begin{align}
\phi_{\varepsilon,n}(s)
:={}&
u(s)\mathbf 1_{[0,\varepsilon)}(s) \nonumber\\
&+
g_n\left(
\int_0^s\xi_p''(r)\gamma(r)u(r)\,dr
+
\int_0^s\sqrt{\xi_p''(r)}\,dB_r
\right)
\mathbf 1_{[\varepsilon,1]}(s).
\label{eq:phi-epsilon-n}
\end{align}
The process $\phi_{\varepsilon,n}$ is progressively measurable and
$\|\phi_{\varepsilon,n}(s)\|\le1$, so
$\phi_{\varepsilon,n}\in D[0,1]$. 
For fixed $\varepsilon,n$, we have by~\eqref{eq:measure-limit}:
\begin{align*}
&\int_{0}^1\xi''_p(s)\gamma_{p,\beta}(s)\phi_{\varepsilon,n}(s)ds \xrightarrow{L^1}\int_{0}^1\xi''_p(s)\phi_{\varepsilon,n}(s)\nu_0(ds),\\
&\int_{0}^1\xi''_p(s)\gamma_{p,\beta}(s)\E\|\phi_{\varepsilon,n}(s)\|^2ds \xrightarrow{L^1}\int_{0}^1\xi''_p(s)\E\|\phi_{\varepsilon,n}(s)\|^2\nu_0(ds).\\
\end{align*}
Recall that for $t \ge 0$, $0 \le t-g_{\beta}(t) \le \frac{1}{\beta} \log (2(1+\beta t))$. This implies
\begin{align}
\liminf_{\beta\to\infty}
\Psi_{p,\beta,\alpha_{p,\beta}}(0,0)
\ge{}&
\mathbb E\left\|
\int_0^1
\xi_p''(s)\phi_{\varepsilon,n}(s)\nu_0(ds)
+
\int_0^1\sqrt{\xi_p''(s)}\,dB_s
\right\| \nonumber\\
&-
\frac12
\int_0^1
\xi_p''(s)
\mathbb E\|\phi_{\varepsilon,n}(s)\|^2
\nu_0(ds).
\label{eq:endpoint-epsilon}
\end{align}
Letting $\varepsilon\uparrow1$ and setting
$$
S
=
\int_0^1\xi_p''(s)\gamma(s)u(s)\,ds
+
\int_0^1\sqrt{\xi_p''(s)}\,dB_s,
$$
we have, by dominated convergence theorem,
\begin{align}
\liminf_{\beta\to\infty}
\Psi_{p,\beta,\alpha_{p,\beta}}(0,0)
\ge{}&
\mathbb E\|S+\upsilon g_n(S)\| \nonumber\\
&-
\frac12
\int_0^1
\xi_p''(s)\gamma(s)
\mathbb E\|u(s)\|^2\,ds
-
\frac{\upsilon}{2}\mathbb E\|g_n(S)\|^2.
\label{eq:endpoint-n}
\end{align}
Finally, let $n\to\infty$.  By the definition of $g$,
$$
\|S+\upsilon g(S)\|=\|S\|+\upsilon,
\qquad
\|g(S)\|=1
$$
for every $S\in\mathbb R^3$.  Hence
\begin{align}
\liminf_{\beta\to\infty}
\Psi_{p,\beta,\alpha_{p,\beta}}(0,0)
\ge{}&
\mathbb E\|S\|
-
\frac12
\int_0^1
\xi_p''(s)\gamma(s)
\mathbb E\|u(s)\|^2\,ds
+
\frac{\upsilon}{2}.
\end{align}
Taking the supremum over $u\in D[0,1]$ completes the proof.
\end{proof}

Combining Lemma~\ref{lem:scalar-formula} with Lemma~\ref{prop:endpoint-gain} and \eqref{eq:penalty-limit}, we obtain
\begin{align*}
    \liminf_{\beta\rightarrow\infty} \frac{A_p(\beta)}{\beta} &\ge \Psi_{p,\gamma}(0,0)+\frac{\upsilon}{2}-\frac12
\int_0^1
s \xi_p''(s)\gamma(s) ds- \frac{\upsilon}{2}\\
&= \mathcal{P}_p(\gamma) \ge \inf_{\tilde \gamma \in \mathcal{U}}\mathcal{P}_p(\tilde \gamma) 
\end{align*}
This completes the proof of the theorem.
\end{proof}

\begin{proof}[Proof of Proposition~\ref{cor:large_p}]
The proof is split into two parts.

\noindent \textbf{Upper bound:}

The upper bound can be proved by selecting constant function $\gamma(t) \equiv m$ for some $m>0$. For such constant functions, we have by definition~\eqref{eq:define_p},
\begin{equation*}
    \Psi_{p,\gamma}(0,0)= \sup_{u \in D[0,1]}\left\{\mathbb{E}\Bigg[\Big\|\int_{0}^1 \sqrt{\xi''_{p}(t)} dB_t+ m\int_{0}^1 \xi''_p(t) u(t) dt\Big\|\Bigg]- \frac{m}{2}\int_0^1\xi''_p(t) \mathbb{E}(\|u(t)\|^2) dt\right\}.
\end{equation*}
Set $r=\xi_p'(t)=pt^{p-1}$ and define
$$
W_r
:=
\int_0^{(r/p)^{1/(p-1)}}
\sqrt{\xi_p''(t)}\,dB_t,
\qquad 0\le r\le p.
$$
The process $W$ is a standard Brownian motion in $\mathbb R^3$~\cite{revuz2013continuous}. Therefore, reparametrization yields
\begin{align}
    \Psi_{p,\gamma}(0,0)
    &=\sup_{u \in D[0,p]}
    \left\{
    \mathbb E\left\|
    W_p+m\int_0^p u(r)\,dr
    \right\|
    -\frac m2
    \int_0^p\mathbb E\|u(r)\|^2\,dr
    \right\} \nonumber \\
    & \le \frac{1}{m} \log\mathbb E e^{m\|W_p\|},
    \label{eq:constant-control}
\end{align}
where the last inequality is Bou\'e-Dupuis variational formula~\cite{boue1998variational}, whose supremum is over all square-integrable progressively measurable processes \(u\). Since $W_p\stackrel{d}{=}\sqrt p\,Z$ for
$Z\sim N(0,I_3)$ and
$\int_0^1t\xi_p''(t)\,dt=p-1$,
\begin{align*}
    \bP_p(\gamma)
    \le
    \frac1m\log\mathbb E e^{m\sqrt p\,\|Z\|}
    -
    \frac{m(p-1)}2.
    \label{eq:constant-upper}
\end{align*}
Note that $\|Z\|$ has density
$\sqrt{2/\pi}\,z^2e^{-z^2/2}$ on $[0,\infty)$.  Hence, 
\begin{align*}
    \mathbb E e^{m\sqrt p \|Z\|}
    &=
    \sqrt{\frac2\pi}
    e^{m^2p/2}
    \int_0^\infty x^2e^{-(x-m\sqrt p)^2/2}\,dx \le
    C(1+m^2p)e^{m^2p/2},
\end{align*}
implying that 
\begin{align*}
    \bP_p(\gamma)
    \le
    \frac m2
    +
    \frac1m\log\{C(1+m^2p)\}.
\end{align*}
Taking $m=\sqrt{2\log p}$ yields the required upper bound. 

\noindent \textbf{Lower bound:}

Fix $\gamma\in\mathcal U$. Recall, that 
$|\mathcal{P}_{p}(\gamma) -\mathcal{P}_{p}(\tilde \gamma)| \le C_p \|\gamma-\tilde \gamma\|_1$, where $C_p>0$ is a constant independent of $\gamma$. Hence, choosing $\tilde \gamma=\gamma \wedge K$ with $K \rightarrow \infty$, we can restrict ourselves to bounded $\gamma$. 

For any such bounded $\gamma$, consider the SDE
\begin{equation}\label{eq:lower_bound_sde}
    dX_t = \sqrt{ \xi''_p(t)} dB_t+\xi''_p(t) \gamma(t) \frac{X_t}{\|X_t\|} dt, \qquad X_0=0,
\end{equation}
with the standard convention $x/\|x\|=0$ if $x=0$. With the reparametrization $r=\xi_p'(t)=pt^{p-1}$, existence of strong solution of the process is guaranteed by~\cite[Theorem 1]{veretennikov1980strong}. Next, as the drift is bounded, by Girsanov's Theorem, there exists a measure under which the process $\tilde B_t= B_t+ \int_0^t \sqrt{\xi''_p(s)} \gamma(s) \frac{X_s}{\|X_s\|} ds$ is standard Brownian motion and
\begin{equation}X_t= \int_0^t \sqrt{\xi''_p(s)} d \tilde B_s \overset{d}{=} W_{\xi'_p(t)}=W_r,\end{equation}
for standard Brownian motion in $\mathbb R^3$. Hence, $\|X_t\| > 0$ almost surely for $t>0$.

To obtain a lower bound, we choose $u$ such that $u(0)=0$, $u(t)= X_t/\|X_t\|$, $t>0$. Note that $u\in D[0,1]$ since $\|u(t)\|=1$ for all $t>0$, almost surely. Hence we obtain, from definition~\eqref{eq:define_p}
\begin{equation}\label{eq:bound_i}
    \Psi_{p,\gamma}(0,0) \ge \mathbb E \|X_1\| - \frac{1}{2} \int_{0}^1 \xi''_p(t) \gamma(t)dt.
\end{equation}
Set $R_t= \|X_t\|$. Under the Girsanov measure described above, we have $R_t$ is a time-changed three-dimensional Bessel process. By It\^o formula of Bessel process~\cite[Chapter XI]{revuz2013continuous}:
\begin{align}
    dR_t &= \frac{X_t}{R_t} dX_t+ \frac{\xi''_p(t)}{R_t} dt \nonumber \\
    &= \sqrt{\xi''_p(t)} \frac{X_t}{R_t} dB_t + \xi''_p(t) \gamma(t) dt+ \frac{\xi''_p(t)}{R_t} dt \nonumber \\
    &=: \sqrt{\xi''_p(t)} db_t + \xi''_p(t) \gamma(t) dt+ \frac{\xi''_p(t)}{R_t} dt, \label{eq:rt} 
\end{align}
with $b_t=\int_0^t \frac{X_s}{R_s} dB_s$. Since $b_t$ has quadratic variation $t$, it is a one-dimensional Brownian motion. Now taking expectation,
\begin{equation}\label{eq:Rt_expectation}
    \mathbb E R_1= \int_0^1\xi''_p(t) \gamma(t) dt+ \int_0^1 \xi''_p(t) \mathbb{E}\frac{1}{R_t} dt
\end{equation}
Plugging this into~\eqref{eq:bound_i} yields
\begin{equation}\label{eq:bound_ii}
    \mathcal{P}_p(\gamma) \ge \frac{1}{2}\int_0^1 (1 - t)\xi''_p(t) \gamma(t) dt+ \int_0^1 \xi''_p(t) \mathbb{E}\frac{1}{R_t} dt
\end{equation}
Define
\begin{equation}
    A(t):= \int_0^t \xi''_p(s) \gamma(s) ds, \qquad M:=\int_0^1 A(t)\,dt= \int_0^1 (1 - t)\xi''_p(t) \gamma(t) dt.
\end{equation}
Assume first $M>0$. By definition of $R_t$,
\begin{align*}
    \mathbb E R_t
    &\le
    \mathbb E\left\|
    \int_0^t\sqrt{\xi_p''(s)}\,dB_s
    \right\|+A(t) \le
    \sqrt{3\xi_p'(t)}+A(t),
\end{align*}
where the second inequality follows from It\^o isometry and Cauchy-Schwarz inequality. Hence, by Jensen's inequality,~\eqref{eq:bound_ii} gives
\begin{align}
    \bP_p(\gamma) \ge \frac{M}2 + \int_0^1 \xi''_p(t) \frac{1}{\mathbb{E} R_t} dt
    \ge
    \frac{M}{2}
    +
    \int_0^1
    \frac{\xi_p''(t)}
    {\sqrt{3\xi_p'(t)}+A(t)}\,dt.
\label{eq:deterministic-lower}
\end{align}
Since $\gamma$ is nondecreasing,
$A(t)\le\gamma(t)\xi_p'(t)$, implying 
\begin{align*}
    \frac{d}{dt}
    \Bigg[
    \frac{A(t)}{\xi_p'(t)}
    \Bigg]
    =
    \frac{\xi_p''(t)}
    {\xi_p'(t)^2}
    \left\{
    \gamma(t)\xi_p'(t)-A(t)
    \right\}
    \ge0
\end{align*}
for a.e.\ $t\in(0,1)$.  Hence,
\begin{align}
    M
    &\ge
    \int_t^1 A(s)\,ds
    \ge
    \frac{A(t)}{\xi_p'(t)}
    \int_t^1\xi_p'(s)\,ds
    =
    \frac{A(t)}{\xi_p'(t)}(1-t^p).
\label{eq:A-bound-general}
\end{align}
For the rest of the argument, we take $p$ sufficiently large. In particular, when $\xi_p'(t)\le p/\log p$, we have $t^p\le(\log p)^{-1}$ which implies
\begin{align}
    A(t)
    \le
    \frac{M}{1-(\log p)^{-1}}\xi_p'(t).
\label{eq:A-bound}
\end{align}
Using \eqref{eq:deterministic-lower} with $r=\xi_p'(t)$, we get
\begin{align}
    \bP_p(\gamma)
    &\ge
    \frac{M}{2}
    +
    \int_0^{p/\log p}
    \frac{dr}
    {\sqrt{3r}
    +Mr/\{1-(\log p)^{-1}\}} \nonumber \\
    &=\frac{M}{2}
    +
    \frac{2\{1-(\log p)^{-1}\}}{M}
    \log\left(
    1+
    \frac{M}
    {\sqrt{3}\{1-(\log p)^{-1}\}}
    \sqrt{\frac{p}{\log p}}
    \right).
\label{eq:M-integral}
\end{align}


If $M\ge3\sqrt{\log p}$, the first summand of the above display is $\ge \sqrt{2\log p}$, providing the lower bound. If
$0<M\le(\log p)^{-1}$, the second summand is $\ge \sqrt{2\log p}$, which again gives the lower bound.  Finally, if $(\log p)^{-1}\le M\le3\sqrt{\log p}$, observe that
\begin{align*}
    \log\left(
    1+ \frac{M}{\sqrt{3}\{1-(\log p)^{-1}\}}\sqrt{\frac{p}{\log p}}
    \right)\ge   \frac12\log p - \frac{3}{2}\log\log p -O(1).
\end{align*}
Hence,
\begin{align*}
    \bP_p(\gamma)
    &\ge
    \frac{M}{2}
    +(1-o(1))\frac{\log p}{M}\ge
    (1-o(1))\sqrt{2\log p},
\end{align*}
If $M=0$, then $\gamma=0$, and $\bP_p(\gamma)= \sqrt{p} \E \|Z\|$, yielding the required bound. This proves the lower bound, finishing the proof of the proposition. 
\end{proof}

\begin{proof}[Proof of Proposition~\ref{prop:universality}] 
   For $m_n:=3^pN_n$ pairs $j=(\bar \bi,\bar \ba)$, set
\begin{equation}\label{eq:define_varphi}
    \varphi_j(s)=\prod_{r=1}^p s_{i_r}^{a_r},
\end{equation} so $|\varphi_j(s)|\le1$ for any $s\in (\mathbb S^2)^n$. Using Bloch vectors as in~\eqref{eq:x_proc_reduce}, for deterministic $x\in\mathbb R^{m_n}$,
\begin{equation}\label{eq:field-free-energy}
X^x_{n,p}(s)=\sqrt{\frac n{N_n}}\sum_{j=1}^{m_n}x_j\varphi_j(s),
\qquad M_n(x)=\frac1n\max_{s\in(\Sph)^n}X^x_{n,p}(s)
\end{equation}
Define
$$F_{n,\beta}(x)=\frac1{n\beta}\log\int_{(\Sph)^n}
 e^{\beta X^x_{n,p}(s)}\,\nu^{\otimes n}(ds),\qquad\beta>0,$$
where $\nu$ is normalized surface measure on $\Sph$.
In particular, $M_n(x)=E^\star_{n,\mathrm{prod}}(p;x)$ and
$\E F_{n,\beta}(\alpha)=A^X_{n,p}(\beta)/\beta$ by~\eqref{eq:define_ax}.

For each $j=(\bar \bi,\bar \ba)$, we have that $i_1,\ldots,i_p$ are distinct, implying
\begin{equation}
\int_{(\Sph)^n}\varphi_j(s)\,\nu^{\otimes n}(ds)
=\prod_{r=1}^p\int_{\Sph}u^{a_r}\,\nu(du)=0.
\end{equation}
The last equality follows by symmetry. Thus $\int_{(\Sph)^n} X^x_{n,p}\,d\nu^{\otimes n}=0$ for every deterministic $x$. By Jensen's inequality, we have
$0\le F_{n,\beta}(x)\le M_n(x)$.

Fix $0<\delta<1$ and any $s=(s_1,\ldots,s_n)\in(\Sph)^n$. Define $C_i=\{u\in\Sph:u\cdot s_i\ge1-\delta\}$, implying $\nu(C_i)= \frac{\delta}{2}$. Define $T=(T_1,\ldots,T_n)$, where $T_i$'s are independent random variable, with law $\nu$ conditioned on $C_i$. Then $\mathbb E_T(T_i)= (1-\frac{\delta}{2}) s_i$. Hence,
\begin{equation}
\E_T X^x_{n,p}(T)
=\sqrt{\frac n{N_n}}\sum_{\bar \bi,\bar \ba}x[\bar \bi,\bar \ba]
 \prod_{r=1}^p\E_T T_{i_r}^{a_r}
=\Big(1-\frac{\delta}{2}\Big)^pX^x_{n,p}(s).
\end{equation}
Restricting the partition function to $\prod_{i=1}^n C_i$, we obtain
\begin{equation}
\int_{(\mathbb S^2)^n} e^{\beta X^x_{n,p}(u)}\,\nu^{\otimes n}(du)
\ge \left(\frac\delta2\right)^n\E_T e^{\beta X^x_{n,p}(T)}
\ge \left(\frac\delta2\right)^n e^{\beta (1-\frac{\delta}{2})^p X^x_{n,p}(s)}.
\end{equation}
Taking the supremum over $s$ achieves the bound
\begin{equation}\label{eq:cap-bound}
\Big(1-\frac{\delta}{2}\Big)^p M_n(x)-\frac1\beta\log\frac2\delta
\le F_{n,\beta}(x)\le M_n(x),
\qquad 0<\delta<1,\quad\beta>0.
\end{equation}

Equipped with this bound, we invoke the Lindeberg comparison principle~\cite{chatterjee2005simple}. To this end, let $\langle\cdot\rangle:= \langle\cdot\rangle_{x,\beta}$ denote expectation under the probability
measure proportional to $e^{\beta X^x_{n,p}(s)}\nu^{\otimes n}(ds)$. With $\varphi$ defined as~\eqref{eq:define_varphi},
\begin{align}\label{eq:derivatives}
\partial_jF_{n,\beta}=\frac1{\sqrt{nN_n}}\langle\varphi_j\rangle, \quad 
\partial_j^2F_{n,\beta} =\frac\beta{N_n}\bigl(\langle\varphi_j^2\rangle
                           -\langle\varphi_j\rangle^2\bigr), \quad 
\partial_j^3F_{n,\beta} =\frac{\beta^2\sqrt n}{N_n^{3/2}}
       \left\langle(\varphi_j-\langle\varphi_j\rangle)^3\right\rangle.
\end{align}
Since $|\varphi_j|\le 1$, we obtain, uniformly in $j$ and $x$,
\begin{equation}\label{eq:derivative-bounds}
|\partial_j^2F_{n,\beta}|\le\frac\beta{N_n}=:D_2,
\qquad
|\partial_j^3F_{n,\beta}|\le
\frac{8\beta^2\sqrt n}{N_n^{3/2}}=:D_3.
\end{equation}

We adapt~\cite[Theorem~1.1]{chatterjee2005simple} as follows. Choose $\alpha$ and $\alpha'$ independently and define
\begin{equation}
h_j(t)=F_{n,\beta}(\alpha'_1,\ldots,\alpha'_{j-1},t,
                         \alpha_{j+1},\ldots,\alpha_{m_n}).
\end{equation}
By a Taylor series expansion,
$h_j(t)=h_j(0)+t h_j'(0)+t^2h_j''(0)/2+R_j(t)$, with
\begin{equation}
|R_j(t)|\le\min\{D_2t^2,\ D_3|t|^3/6\}.
\end{equation}
using~\eqref{eq:derivative-bounds}. Fix $\varepsilon>0$ and $K=\varepsilon n^{(p-1)/2}$. Then we have
\begin{align}\label{eq:taylor-comparison}
\Delta_{n,\beta}:=\Big|\E F_{n,\beta}(\alpha')-\E F_{n,\beta}&(\alpha)\Big| \le D_2\sum_{j=1}^{m_n}\Bigg\{\E[(\alpha'_j)^2;|\alpha'_j|>K] +\E[\alpha_j^2;|\alpha_j|>K]\Bigg\} \nonumber\\
&+\frac{D_3}{6}\sum_{j=1}^{m_n}
 \Bigg\{\E[|\alpha'_j|^3;|\alpha'_j|\le K]
              +\E[|\alpha_j|^3;|\alpha_j|\le K]\Bigg\}
\end{align}
The second summand above can be bounded using
$|z|^3 1_{\{|z|\le K\}}\le Kz^2$ to obtain
\begin{equation}
\frac{D_3}{6}\sum_{j=1}^{m_n}
 \E[|\alpha'_j|^3;|\alpha'_j|\le K]
\le C\beta^2\varepsilon\frac{n^{p/2}}{\sqrt{N_n}}
\le C'\beta^2\varepsilon,
\end{equation}
since $N_n/n^p\to1/p!$. Thus, with $G\sim N(0,1)$, \eqref{eq:taylor-comparison} implies
\begin{equation}\label{eq:truncation-bound}
\Delta_{n,\beta}
\le \beta L_n(\varepsilon)
 +3^p\beta\,\E[G^2;|G|>K]
 +C_p\beta^2\left(\varepsilon+\sqrt{\frac n{N_n}}\right),
\end{equation}
where $L_n(\varepsilon)$ is defined by~\eqref{eq:lindeberg-assumption}, with $L_n(\varepsilon) \rightarrow 0$ by the hypothesis of the Proposition. For fixed $\beta>0$, taking $n\rightarrow \infty$ followed by $\varepsilon \rightarrow 0$ implies $\Delta_{n,\beta}\longrightarrow 0$.

By Lemma~\ref{lem:annealing}, we know $\kappa:= \sup_n \mathbb E M_n(\alpha) <\infty$. Using~\eqref{eq:cap-bound}, we have
\begin{align*}
    &\Big(1-\frac{\delta}{2}\Big)^p \mathbb E M_n(\alpha')-\frac1\beta\log\frac2\delta
\le \mathbb E F_{n,\beta}(\alpha')\le \mathbb E F_{n,\beta}(\alpha)+ \Delta_{n,\beta} \le \E M_n(\alpha)+\Delta_{n,\beta} \\
& \implies \mathbb E M_n(\alpha') -\mathbb E M_n(\alpha) \le \Bigg(\Big(1-\frac{\delta}{2}\Big)^{-p}-1\Bigg) \E M_n(\alpha) + \Big(1-\frac{\delta}{2}\Big)^{-p} \Big(\frac1\beta\log\frac2\delta + \Delta_{n,\beta}\Big)
\end{align*}
Similarly, 
\begin{align*}
    \mathbb E M_n(\alpha) -\mathbb E M_n(\alpha') \le \Bigg(1-\Big(1-\frac{\delta}{2}\Big)^{p}\Bigg)\E M_n(\alpha)+ \frac1\beta\log\frac2\delta + \Delta_{n,\beta}
\end{align*}
Combining the above two displays yield
\begin{align*}
    &|\E M_n(\alpha)- \E M_n(\alpha')|\\ 
&\le \kappa \left(\Big(1-\frac{\delta}{2}\Big)^{-p}-1\right) + \Big(1-\frac{\delta}{2}\Big)^{-p} \Big(\frac{1}{\beta}\log \frac{2}{\delta} + \Delta_{n,\beta}\Big)
\end{align*}
Choosing $\delta=\beta^{-1/2}$ and taking $n\rightarrow \infty$, followed by $\beta \rightarrow \infty$ proves that $\lim\limits_n|\E M_n(\alpha)- \E M_n(\alpha')| \rightarrow 0$.

Finally, to obtain concentration of $M_n(\alpha')$, we invoke Efron-Stein's inequality. Let $\alpha'^{(j)}$ be obtained from $\alpha'$ by replacing $\alpha'_j$ with an independent copy $\widetilde\alpha'_j$. By definition of $M_n(\alpha')$, we obtain
\begin{equation}
|M_n(\alpha')-M_n(\alpha'^{(j)})|
\le\frac{|\alpha'_j-\widetilde\alpha'_j|}{\sqrt{nN_n}}.
\end{equation}
Therefore, by the Efron--Stein inequality, we have
\begin{equation}\label{eq:concentration}
\Var\bigl(M_n(\alpha')\bigr)
\le\frac1{2nN_n}\sum_{j=1}^{m_n}
 \E(\alpha'_j-\widetilde\alpha'_j)^2
=\frac{3^p}{n}.
\end{equation}
Hence, $M_n(\alpha')-\E M_n(\alpha') \rightarrow 0$ in probability. Invoking Theorem~\ref{thm:main}, we complete the proof of the result.
\end{proof}

\section*{Acknowledgements}
I thank Rohan Sarkar, Samriddha Lahiry, and Subhabrata Sen for numerous helpful discussions. GPT was used to review the manuscript, check mathematical arguments, and assist with copyediting. All AI-generated content was carefully verified, and I take full responsibility for its correctness.

\bibliographystyle{alpha}
\bibliography{ref}

\end{document}